 \newif\if@borderstar
 \def\bordermatrix{\@ifnextchar*{%
 \@borderstartrue\@bordermatrix@i}{\@borderstarfalse\@bordermatrix@i*}%
 }
 \def\@bordermatrix@i*{\@ifnextchar[{\@bordermatrix@ii}{\@bordermatrix@ii[()]}}
 \def\@bordermatrix@ii[#1]#2{%
 \begingroup
 \m@th\@tempdima8.75\p@\setbox\z@\vbox{%
 \def\cr{\crcr\noalign{\kern 2\p@\global\let\cr\endline }}%
 \ialign {$##$\hfil\kern 2\p@\kern\@tempdima & \thinspace %
 \hfil $##$\hfil && \quad\hfil $##$\hfil\crcr\omit\strut %
 \hfil\crcr\noalign{\kern -\baselineskip}#2\crcr\omit %
 \strut\cr}}%
 \setbox\tw@\vbox{\unvcopy\z@\global\setbox\@ne\lastbox}%
 \setbox\tw@\hbox{\unhbox\@ne\unskip\global\setbox\@ne\lastbox}%
 \setbox\tw@\hbox{%
 $\kern\wd\@ne\kern -\@tempdima\left\@firstoftwo#1%
 \if@borderstar\kern2pt\else\kern -\wd\@ne\fi%
 \global\setbox\@ne\vbox{\box\@ne\if@borderstar\else\kern 2\p@\fi}%
 \vcenter{\if@borderstar\else\kern -\ht\@ne\fi%
 \unvbox\z@\kern-\if@borderstar2\fi\baselineskip}%
 \if@borderstar\kern-2\@tempdima\kern2\p@\else\,\fi\right\@secondoftwo#1 $%
 }\null \;\vbox{\kern\ht\@ne\box\tw@}%
 \endgroup
 }
\newcommand{\chuhao}{\fontsize{19pt}{\baselineskip}\selectfont}
\numberwithin{equation}{section}
 \newtheorem{theorem}{Theorem}[section]
 \newtheorem{lemma}{Lemma}[section]
 \newtheorem{remark}{Remark}[section]
 \newtheorem{example}{Example}[section]
 \newtheorem{corollary}{Corollary}[section]
\newtheorem{algorithm}{Algorithm}[section]
\title{\bf\color{black} \chuhao{Matrix-analytic methods for solving  Poisson's equation  with applications to  Markov chains of $GI/G/1$-type}}
\author{Jinpeng Liu\thanks{School of Mathematics and Statistics, New Campus, Central
South University, Changsha, Hunan, 410083, P.R. China, E-mail:
liujinpeng@csu.edu.cn.
\newline \indent \hspace*{-2.5mm}
$^{**}$ School of Mathematics and Statistics, New Campus, Central
South University, Changsha, Hunan, 410083, P.R. China, E-mail: liuyy@csu.edu.cn.
\newline \indent \hspace*{-2.5mm}
$^{***}$ School of Mathematics and Statistics, Carleton
University, 1125 Colonel By Drive, Ottawa, ON Canada K1S 5B6,
E-mail: zhao@math.carleton.ca.
}\ \ and \ \ Yuanyuan Liu$^{**}$ and \ \ Yiqiang Q. Zhao$^{***}$}
\date{\today}
\begin{document}

\maketitle
\begin{abstract}
In this paper, we are devoted to  developing  matrix-analytic methods for solving Poisson's equation for irreducible and positive recurrent discrete-time Markov chains (DTMCs).
Two special solutions, including  the deviation matrix $D$ and the expected additive-type functional matrix $K$,  will be considered.
The results are applied to Markov chains of $GI/G/1$-type and $MAP/G/1$ queues with negative customers.
Further extensions to continuous-time Markov chains (CTMCs) are also investigated.
\vskip 0.2cm
\noindent \textbf{Keywords:}\ \ Markov chains, Poisson's equation, matrix-analytic methods,
Markov chains of $GI/G/1$-type, the deviation matrix, the expected additive-type functional matrix, $MAP/G/1$ queues
\vskip 0.2cm
\noindent {\bf AMS 2010 Subject Classification:} \ \ 60J10, 60J22, 60J27.
\end{abstract}

\section{Introduction}
Let $P = (P(i, j))_{i,j\in E}$ be the transition matrix of a DTMC $\mathbf{\Phi}=\{\Phi_k, k\geq 0\}$ on a countable state space $E$.
It is assumed that $P$ is irreducible and positive recurrent   with the unique invariant probability vector ${\boldsymbol\pi}$
such that  ${\boldsymbol\pi}^TP={\boldsymbol\pi}^T$ and ${\boldsymbol\pi}^T{\boldsymbol e}=1$, where $\boldsymbol e$ is a column vector of ones.
Let $\boldsymbol{g}: E\rightarrow \mathbb{R}$ be a function (or vector) satisfying $\boldsymbol{\pi}^T|\boldsymbol{g}|<\infty$.
For a given transition matrix $P$ and a function $\boldsymbol{g}$, Poisson's equation is written as
\begin{equation}\label{poi-fun-0}
  (I-P)\boldsymbol{f}=\overline{\boldsymbol{g}},
\end{equation}
where $I$ is  the identity matrix and $\overline{\boldsymbol{g}}=\boldsymbol{g}-(\boldsymbol{\pi}^{T}\boldsymbol{g})\boldsymbol{e}$.
In general, we refer to  the functions $\boldsymbol{g}$ and $\boldsymbol{f}$ as  the forcing function and the solution of Poisson's equation $(\ref{poi-fun-0})$,  respectively.

Poisson's equation has an important influence on the development of  Markov chain theory.
In \cite{GO02}, Glynn and Ormoneit  established a Hoeffding's inequality,  which provides an upper bound for the tail probability of the law of large numbers,
for strong ergodic DTMCs via the solution of Poisson's equation.
Further progress along this direction can be found in \cite{T09,LL21} and their references.
Poisson's equation may also be associated to central limit theorems.
In \cite{GM96},  Glynn and Meyn pointed out that the solution of Poisson's equation can be used to express the variance constant,
which is a key parameter in the central limit theorem.
Please refer to \cite{LWX14,GI22} and references therein for recent developments in this filed.
In Markov decision processes, Poisson's equation was known as the  dynamic programming
equation, see e.g.,  \cite{B07,SSO13}, and the functions $\boldsymbol{g}$ and $\boldsymbol{f}$ were called the cost function and the value function, respectively.
Poisson's equation is also applied to perturbation theory \cite{L12,JLT17}, augmented truncation approximations \cite{M17,LL18}, machine learning \cite{MV18,MV19} and others.

For real applications, it is crucial to solve  or estimate the solution of Poisson's equation.
In the literature, the solution of Poisson's equation had been  investigated  for birth-death processes \cite{L11,N21}, single-birth processes \cite{CZ04,JLY14},
single-death processes \cite{WZ20,LLZ21}, $M/G/1$  queues \cite{P94}, $PH/PH/1$ queues \cite{B96}, among others.
In addition, there are some approximate schemes for the solution of Poisson's equation, see e.g., \cite{MS02,MV18}.
Recently, Liu et al. \cite{LLZ21}  established augmented truncation approximations for the solution of Poisson's equation.

In this paper, we use matrix-analytic methods to solve Poisson's equation.
Since Neuts \cite{N81,N89} introduced and studied matrix-analytic methods for stochastic models,
matrix-analytic methods  had been widely used in queueing theory, supply chain management, inventory theory,  reliability, telecommunications networks,
risk and insurance analysis, finance mathematics, and biostatistics, see e.g., \cite{GV99,A03,H14}.
In \cite{DLL13}, Dendievel et al. used matrix-analytic methods to solve Poisson's equation for quasi-birth-and-death (QBD) processes.
Further progress has been made in \cite{LWX14, BDLM16,MKK2018}.
Here, we extend matrix-analytic methods in solving Poisson's equation for general Markov chains.
Specifically, we try to find a matrix $X$, with which we may represent the solution $\boldsymbol{f}$ to Poisson's equation (\ref{poi-fun-0}) in the form of $\boldsymbol{f}=X\boldsymbol{g}$.
In this sense, we rewrite Poisson's equation (\ref{poi-fun-0}) as the following matrix form
\begin{equation}\label{poi-fun-1}
  (I-P)X=I-\boldsymbol{e\pi}^T.
\end{equation}

Our  first  main result,  presented in Theorem \ref{the-result-general-MC}, gives a general matrix solution  $\widetilde{X}$ that satisfies Poisson's equation (\ref{poi-fun-1}).
Moreover, the matrix $\widetilde{X}$ is a unique solution in the sense of up to a constant matrix under some additional conditions,  i.e. $X=\widetilde{X}+\boldsymbol{e\beta}^T$,
where $\boldsymbol{\beta}$ is an arbitrary constant vector.
Then, two special solutions, which are called the deviation matrix $D$, see e.g., \cite{CD2002,DLL13},
and the expected additive-type functional matrix $K$, see e.g., \cite{P94,MKK2018},  will be investigated.
Particularly, we will focus on the latter because it needs a weaker existence condition than that for the deviation matrix $D$.

To apply our results, we consider the matrix solution $\widetilde{X}$ for Markov chains of $GI/G/1$-type,
which are class of block-structured Markov chians with many applications in queueing theory, see e.g., \cite{GH90, Z2000,MTZ12,H14}.
Mathematically, a DTMC $\mathbf{\Phi}$ on state space $E=\bigcup\nolimits_{i=0}^\infty {\ell (i)}$ is called a  Markov chain of $GI/G/1$-type if its transition probability matrix $P$  is given by
\begin{eqnarray}\label{P-of-GIG1}
&&P=\left[
\begin{array}{ccccc}
B_0 & B_1 & B_2 & B_3 & \cdots \\
B_{-1} & A_0 & A_1 & A_2 &\cdots \\
B_{-2} & A_{-1} & A_{0} & A_1 & \cdots \\
B_{-3} & A_{-2} & A_{-1} & A_0 & \cdots\\
\vdots & \vdots & \vdots & \vdots &\ddots
\end{array}
\right],
\end{eqnarray}
where $\ell(i):=\{(i,j), i\ge 0,1\le j \le m\}$ denotes the level set,
the matrix sequences $\{B_{i}, i\in \mathbb{Z}\}$ and $\{A_{i}, i\in \mathbb{Z}\}$ are non-negative matrices of size $m<\infty$,
where $\mathbb{Z}:=\{0,\pm1,\pm2,\cdots\}$.
Suppose that $\sum_{i=0}^{\infty} B_{i}$, $B_{-j} + \sum_{i=-j+1}^{\infty} A_{i}$ and $\sum_{i=-\infty}^{\infty} A_{i}$ are stochastic.
Markov chains of $GI/G/1$-type include QBD processes if $B_i=B_{-i}=A_i=A_{-i}= 0$ for $i\geq2$,
Markov chains of $GI/M/1$ type if $B_i = A_i = 0$ for $i\geq2$ and Markov chains of $M/G/1$ type if $B_{-i} = A_{-i} = 0$ for $i\geq2$.

The rest of this paper is organized in to 6 sections.
Section 2  introduces preliminaries of the solution of  Poisson's equation and the censored Markov chains, which play a key role for the subsequently proposed  matrix-analytic methods.
In section 3, we present a general matrix solution $\widetilde{X}$ and specific matrix solution $K$ of Poisson's equation  for  DTMCs.
Section 4 applies the results in section 3 to Markov chains of $GI/G/1$-type.
In section 5, we  give numerical calculations of $MAP/G/1$ queues with negative customers.
Section 6 considers the extension of the matrix solution $\widetilde{X}$  for  DTMCs to  CTMCs and section 7 presents concluding remarks.

\section{Preliminaries}
\label{sec:Prelim}
\subsection{The solution of Poisson's equation}
\label{subsec: Introduce Soultion}
For a  finite state Markov chain $\mathbf{\Phi}$, there exists a  unique solution of Poisson's equation (\ref{poi-fun-1}), see e.g., \cite{DLL13}.
Moreover, the solution is given by
\begin{equation}\label{finite-state}
X=(I-P)^{\#}+\boldsymbol{e\beta}^T,
\end{equation}
where $\boldsymbol\beta$ is an arbitrary constant vector that requires additional information to be determined
and $(I-P)^{\#}$ is  the group inverse, see e.g., \cite{Meyer1975,Barlow2000}.
The group inverse $W^{\#}$ of a finite square matrix $W$ is defined to be the unique matrix such that
\[
WW^{\#}W=W,\ \ W^{\#}WW^{\#}=W^{\#} \ \ and \ \ W^{\#}W=WW^{\#}.
\]
In the special case of $W=I-P$, the group inverse $(I-P)^{\#}$ can be easily determined by
\begin{equation}\label{compute-(I-P)}
(I-P)^{\#}=(I-P+\boldsymbol{e\pi}^T)^{-1}-\boldsymbol{e\pi}^T.
\end{equation}

For infinite state Markov chains,  the uniqueness of the solution does not necessarily hold. From Proposition 17.4.1 in \cite{MT09},
we obtain the following lemma, which presents a sufficient criterion for the uniqueness of the solution of Poisson's equation (\ref{poi-fun-1}).

\begin{lemma}\label{uniqueness-solution}
Let $\mathbf{\Phi}$  be an irreducible and positive recurrent Markov chain. Suppose that  $X_1$ and $X_2$ are two solutions of Poisson's equation (\ref{poi-fun-1})
with $\boldsymbol{\pi}^T(|X_1|+|X_2|)<\infty$. Then for some constant vector $\boldsymbol\beta$, we have $X_1=X_2+\boldsymbol{e\beta}^T$.
\end{lemma}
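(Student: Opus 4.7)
The plan is to reduce the uniqueness statement to the fact that, for an irreducible positive recurrent chain, any $\boldsymbol{\pi}$-integrable $P$-harmonic function is constant, which is exactly what Proposition~17.4.1 of \cite{MT09} supplies (column by column).

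First I would form the difference $Y = X_1 - X_2$. Subtracting the two copies of Poisson's equation~(\ref{poi-fun-1}) immediately gives $(I-P)Y = 0$, i.e.\ $PY = Y$, so each column $\boldsymbol{y}_j$ of $Y$ satisfies $P\boldsymbol{y}_j = \boldsymbol{y}_j$ and is therefore a $P$-harmonic function on $E$. Next I would translate the joint integrability hypothesis into a column-wise one: from $\boldsymbol{\pi}^T(|X_1|+|X_2|)<\infty$ and the pointwise bound $|Y| \le |X_1| + |X_2|$, one obtains $\boldsymbol{\pi}^T|Y| < \infty$ entrywise, so that for every fixed $j$ the harmonic function $\boldsymbol{y}_j$ is $\boldsymbol{\pi}$-integrable.

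With these two ingredients in place, I would apply Proposition~17.4.1 of \cite{MT09} to each column $\boldsymbol{y}_j$. Since $\mathbf{\Phi}$ is irreducible and positive recurrent, that result forces every $\boldsymbol{\pi}$-integrable harmonic function to be constant; hence there exists a scalar $\beta_j$ with $\boldsymbol{y}_j = \beta_j \boldsymbol{e}$. Stacking these scalars into the row vector $\boldsymbol{\beta}^T = (\beta_1,\beta_2,\ldots)$ yields $Y = \boldsymbol{e}\boldsymbol{\beta}^T$, i.e.\ $X_1 = X_2 + \boldsymbol{e}\boldsymbol{\beta}^T$, which is what is claimed.

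The step I expect to carry the real content is the appeal to Proposition~17.4.1 of \cite{MT09}; everything else is bookkeeping about differences and the passage from matrix identities to column-wise statements. Since the proposition is stated in terms of functions rather than matrices, the only care needed is to apply it separately to each column and to verify the $\boldsymbol{\pi}$-integrability of each column from the matrix hypothesis $\boldsymbol{\pi}^T(|X_1|+|X_2|)<\infty$. No further structural assumption on $P$ beyond irreducibility and positive recurrence is required, so the argument goes through for the general setting considered in the paper.
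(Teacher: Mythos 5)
Your argument is correct and is essentially the paper's own route: the paper proves nothing beyond citing Proposition~17.4.1 of \cite{MT09}, which is exactly the column-wise uniqueness statement you invoke, applied to each column of $X_1-X_2$ (noting each column of the matrix equation (\ref{poi-fun-1}) is a scalar Poisson equation with forcing function $\mathbb{I}(\cdot=j)$), with the $\boldsymbol{\pi}$-integrability of each column inherited from $\boldsymbol{\pi}^T(|X_1|+|X_2|)<\infty$ just as you describe. The only cosmetic difference is that you phrase the key input as ``$\boldsymbol{\pi}$-integrable harmonic functions are constant,'' which is the content underlying Proposition~17.4.1 rather than its literal statement, but the reduction via $Y=X_1-X_2$ and the stacking of the constants $\beta_j$ into $\boldsymbol{\beta}$ is exactly what the paper intends.
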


In general, the solution of Poisson's equation (\ref{poi-fun-1})  can be presented via the deviation matrix $D$ or the expected additive-type functional matrix $K$.
It is well known that the deviation matrix $D$ is defined as
\begin{equation}\label{definition-D}
D:=\sum_{n=0}^\infty \left(P^n - \boldsymbol{e \pi}^{T}\right),
\end{equation}
see e.g., \cite{CD2002}. It is not difficult to verify that $D$ satisfies Poisson's equation (\ref{poi-fun-1}) and $\boldsymbol{\pi}^{T} {D}=\boldsymbol{0}^T$,
where $\boldsymbol 0$ is the zero vector.
Moreover, we know that the elements of $D$ can be expressed in terms of the expected first return times:
\begin{equation}\label{element-D-1}
      D(i,i)=\pi(i)\left(\mathbb{E}_{\boldsymbol\pi}\left[\tau_i\right]-1\right)
     \end{equation}
and
 \begin{equation}\label{element-D-2}
D(j,i)=D(i,i)-\pi(i)\mathbb{E}_{j}\left[\tau_i\right],\ \ j\neq i,
     \end{equation}
where $\tau_i:=\inf\{k\geq 1:\Phi_k=i\}$  is the first return time to the state $i\in E$
and $\mathbb{E}_{\boldsymbol\pi}[\cdot]$ $($or $\mathbb{E}_{i}[\cdot]$$)$ denotes the  conditional expectation with respect to the initial distribution  $\boldsymbol\pi$ $($or initial state $i$$)$.

From (\ref{definition-D})--(\ref{element-D-2}) and \cite{DLL13},
we know that if the deviation matrix $D$ exists, then the chain must be aperiodic and $\mathbb{E}_{\boldsymbol\pi}\left[\tau_i\right]<\infty$ for some $i\in E$.
In fact, we can construct the expected additive-type functional matrix $K_\alpha=(K_\alpha(i,j))_{i,j\in E}$, which is  called the solution kernel in  Glynn \cite{P94},
that has a weaker existence condition than that for the deviation matrix $D$.
For a  fixed state $\alpha\in E$,
we define the matrix $K_\alpha$ such that
\begin{equation}\label{equ-add-functional}
K_\alpha(i,j):=\mathbb{E}_{i}\bigg[\sum_{k=0}^{\tau_{\alpha}-1}\overline{\mathbb{I}}(\Phi_k=j)\bigg],\ \ i,j\in E,
\end{equation}
where $\mathbb{I}(\cdot)$ denotes the indicator function and $\overline{\mathbb{I}}(\Phi_k=j)={\mathbb{I}}(\Phi_k=j)-\pi(j)$.
For the convenience of subsequent analysis, we simplify $K_\alpha$ to $K$.

\begin{lemma}\label{expected-additive-type-functional}
Let $\mathbf{\Phi}$ be an irreducible and positive recurrent Markov chain.
Then, the matrix  $K$ is one solution of Poisson's equation (\ref{poi-fun-1}) with $K(\alpha,j)=0$ for any state $j\in E$.
\end{lemma}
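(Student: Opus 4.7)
The plan is to establish the two claims separately by direct computation, using positive recurrence together with first-step analysis and the strong Markov property.

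First I would verify $K(\alpha,j)=0$. Starting the chain at $\alpha$, the classical regenerative (Kac) identity for an irreducible positive recurrent chain gives $\mathbb{E}_\alpha[\tau_\alpha]=1/\pi(\alpha)$ and $\mathbb{E}_\alpha\!\left[\sum_{k=0}^{\tau_\alpha-1}\mathbb{I}(\Phi_k=j)\right]=\pi(j)/\pi(\alpha)$, the latter being the usual expression of $\boldsymbol{\pi}$ as normalized expected occupation times during an $\alpha$-cycle. Substituting into (\ref{equ-add-functional}) immediately yields $K(\alpha,j)=\pi(j)/\pi(\alpha)-\pi(j)\cdot 1/\pi(\alpha)=0$.

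Second I would verify the matrix Poisson equation $(I-P)K=I-\boldsymbol{e\pi}^T$ entrywise, namely $K(i,j)-\sum_{\ell\in E}P(i,\ell)K(\ell,j)=\overline{\mathbb{I}}(i=j)$ for every $i,j\in E$. Splitting off the $k=0$ term from (\ref{equ-add-functional}) gives $K(i,j)=\overline{\mathbb{I}}(i=j)+\mathbb{E}_i\!\left[\sum_{k=1}^{\tau_\alpha-1}\overline{\mathbb{I}}(\Phi_k=j)\right]$. Conditioning on $\Phi_1=\ell$ and applying the Markov property at time $1$, the post-jump process is a fresh Markov chain starting at $\ell$ whose first hit of $\alpha$ occurs at the shifted analogue of $\tau_\alpha-1$: for $\ell\neq\alpha$ the inner conditional expectation equals $K(\ell,j)$, while for $\ell=\alpha$ the summation range $k=1,\dots,\tau_\alpha-1$ is empty, consistent with $K(\alpha,j)=0$ just established. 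Summing over $\ell$ collapses the inner expectation to $\sum_\ell P(i,\ell)K(\ell,j)$, and rearranging gives the desired identity uniformly in $i$; in particular at $i=\alpha$ it specializes to $\sum_\ell P(\alpha,\ell)K(\ell,j)=\pi(j)-\mathbb{I}(\alpha=j)$, again consistent with $K(\alpha,j)=0$.

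Two technical items will need care. Finiteness of every expectation follows from irreducibility and positive recurrence, which guarantee $\mathbb{E}_i[\tau_\alpha]<\infty$ for every $i\in E$ and hence $|K(i,j)|\leq\mathbb{E}_i[\tau_\alpha]<\infty$, legitimizing Fubini and the Markov-property interchange of the expectation with the sums over $k$ and $\ell$. The only real obstacle I anticipate is the bookkeeping around the boundary case $\Phi_1=\alpha$; the observation that makes the argument clean is that $K(\alpha,\cdot)\equiv 0$ forces the first-step identity to hold uniformly in $i$ without separating $i=\alpha$ as an additional case.
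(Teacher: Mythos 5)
Your proposal is correct and follows essentially the same route as the paper: it establishes $K(\alpha,j)=0$ via Kac's theorem together with the occupation-measure identity $\pi(\alpha)\,\mathbb{E}_\alpha\bigl[\sum_{k=0}^{\tau_\alpha-1}\mathbb{I}(\Phi_k=j)\bigr]=\pi(j)$, and then derives $(I-P)K=I-\boldsymbol{e\pi}^T$ by first-step analysis with the strong Markov property, using $K(\alpha,\cdot)=0$ to absorb the $\ell=\alpha$ term. Your extra remarks on finiteness of $\mathbb{E}_i[\tau_\alpha]$ and the justification of the interchange of expectation and summation are sound and only make explicit what the paper leaves implicit.
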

\begin{proof}  We first prove $K(\alpha,j)=0$, $j\in E$. It follows form Theorem 10.4.9 in \cite{MT09} that
\[\pi(\alpha)\mathbb{E}_{\alpha}\bigg[\sum_{k=0}^{\tau_{\alpha}-1}{\mathbb{I}}(\Phi_k=j)\bigg]=\pi(j).\]
From Kac's Theorem, we obtain
\[\pi(\alpha)=\frac{1}{\mathbb{E}_\alpha[\tau_\alpha]}.\]
Applying  (\ref{equ-add-functional}) yields $K(\alpha,j)=0$.

Now, we fix state $j\in E$.
Combining  (\ref{equ-add-functional}) and the strong Markov property, we have for any $i\in E$,
\[K(i,j)=\bar{\mathbb{I}}(i=j)+\sum_{ l\neq\alpha}P(i,l)K(l,j).\]
Since $K(\alpha,j)=0$, we obtain
\[K(i,j)=\bar{\mathbb{I}}(i=j)+\sum_{i\in E}P(i,l)K(l,j),\]
from which, we have
\[(I-P)K=I-\boldsymbol{e\pi}^T.\]

The proof is completed.
\end{proof}

\begin{remark}\label{f_d,f_k}
For a given function $\boldsymbol g$ satisfying $\boldsymbol\pi^T|\boldsymbol g|<\infty$, the functions $\boldsymbol{ f_D}:=D\boldsymbol g$ and $\boldsymbol{ f_K}:=K\boldsymbol g$ are solutions of  Poisson's equation (\ref{poi-fun-0}), which are given by
\begin{equation}\label{Dg}
{f_D}(i)=\mathbb{E}_{i}\bigg[\sum_{k=0}^{\infty}\overline{g}(\Phi_k)\bigg],\ \ i\in E,
\end{equation}
and
\begin{equation}\label{Kg}
{f_K}(i)=\mathbb{E}_{i}\bigg[\sum_{k=0}^{\tau_{\alpha}-1}\overline{g}(\Phi_k)\bigg],\ \ i\in E,
\end{equation}
respectively. See Remark 4 in \cite{GI22-2} and Theorem 3.3 in \cite{P94}.
Moreover, combining (\ref{Dg})--(\ref{Kg}) and the strong Markov property, we have ${f_D}(i)-{f_K}(i)={f_D}(\alpha)$ for any $i\in E$.
\end{remark}

\subsection{The censored Markov chain}
\label{subsec: censored chain}
In the following, we introduce the censoring technique.
Let $A$ be a non-empty subset of $E$.
Let $\theta_k$ be the $k^{th}$ time that $\mathbf{\Phi}$  successively visits a state in $A$,
i.e. $\theta_0:=\inf\{m\geq0: \Phi_m\in A\}$ and $\theta_{k+1}:=\inf\{m\geq \theta_{k}+1: \Phi_m\in A\}$.
The censored Markov chain $\mathbf{\Phi}^{(A)}=\{\Phi^{(A)}_k, k\geq0\}$ on  $A$
is defined by $\Phi^{(A)}_k=\Phi_{\theta_{k}}$, $k\geq0$, and its transition matrix and the invariant probability vector are denoted by $P^{(A)}$ and $\boldsymbol\pi^{(A)}$,  respectively.
Let $P_{_{A_1A_2}}=(P_{_{A_1A_2}}(i,j))_{i\in A_1,j\in A_2}$, where  $A_1$ and $A_2$ are subsets of $E$.
According to $A$ and its complement $B:=A^{C}$, we partition the transition matrix $P$ as
\begin{eqnarray}\label{partition-P}
    P = \bordermatrix[{[]}]{%
    & A & B \cr
A & P_{AA} & P_{AB} \cr
B & P_{BA} & P_{BB}
}.
\end{eqnarray}

From  section 5 in \cite{GV99}, we have the following lemma of the censored Markov chain.
\begin{lemma}\label{cen-p}
Let $\mathbf{\Phi}$ be an irreducible and positive recurrent Markov chain with the invariant probability vector ${\boldsymbol\pi}$ and let $A$ be a non-empty subset of $E$.
Then, the censored Markov chain $\mathbf{\Phi}^{(A)}$ is also irreducible and positive recurrent, whose transition probability matrix is given by
\begin{eqnarray}\label{P-E}
P^{(A)}=P_{AA}+P_{AB} \widehat{P}_{BB} P_{BA}
\end{eqnarray}
with $\widehat P_{BB} :=\sum_{k=0}^{\infty} P_{BB}^k$.  Moreover, the invariant probability vector $\boldsymbol\pi^{(A)}$ of  $\mathbf{\Phi}^{(A)}$ is given by
\begin{equation}\label{Pi-A}
\pi^{(A)}(i)=\frac{\pi(i)}{\sum_{j\in A}\pi(j)},\ \ i\in A.
\end{equation}
\end{lemma}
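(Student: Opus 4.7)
The plan is to verify the three claims in sequence: the well-definedness together with irreducibility and positive recurrence of $\mathbf{\Phi}^{(A)}$, the block formula for $P^{(A)}$, and the restriction formula for $\boldsymbol{\pi}^{(A)}$.

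First, I would establish that every $\theta_k$ is almost surely finite. Since $\mathbf{\Phi}$ is irreducible and positive recurrent on $E$ and $A$ is non-empty, any fixed state $\alpha\in A$ is visited infinitely often with probability one from every starting state, so the increasing sequence $\{\theta_k\}$ of successive hitting times of $A$ is almost surely well defined. The strong Markov property then makes $\mathbf{\Phi}^{(A)}=\{\Phi_{\theta_k}\}$ a Markov chain on $A$. Irreducibility transfers from $\mathbf{\Phi}$ to $\mathbf{\Phi}^{(A)}$ because any finite path in $\mathbf{\Phi}$ from $i\in A$ to $j\in A$ can be split at its visits to $A$ to produce a finite sequence of one-step transitions of $\mathbf{\Phi}^{(A)}$. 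Positive recurrence will follow once the invariant distribution is identified and shown to be a probability vector.

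Second, I would derive the transition formula by decomposing according to the excursion into $B$ between two successive visits to $A$. For $i,j\in A$, conditioning on the length of the excursion gives
\begin{equation*}
P^{(A)}(i,j) \;=\; P(i,j) \;+\; \sum_{k=0}^{\infty} \sum_{l,m\in B} P(i,l)\,P_{BB}^{k}(l,m)\,P(m,j),
\end{equation*}
which in block form is exactly $P_{AA}+P_{AB}\widehat{P}_{BB}P_{BA}$. The convergence of $\widehat{P}_{BB}=\sum_{k=0}^{\infty}P_{BB}^{k}$ entry by entry is a consequence of positive recurrence: starting from any $l\in B$, the hitting time of $A$ is almost surely finite, so $(P_{BB}^{k}\boldsymbol{e})(l)\downarrow 0$ and the Neumann-type identity $(I-P_{BB})\widehat{P}_{BB}=I$ holds on $B$.

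Third, I would verify the stationary formula by partitioning the global balance equation $\boldsymbol{\pi}^{T}P=\boldsymbol{\pi}^{T}$ according to \eqref{partition-P}:
\begin{align*}
\boldsymbol{\pi}_A^{T}P_{AA}+\boldsymbol{\pi}_B^{T}P_{BA} &= \boldsymbol{\pi}_A^{T}, \\
\boldsymbol{\pi}_A^{T}P_{AB}+\boldsymbol{\pi}_B^{T}P_{BB} &= \boldsymbol{\pi}_B^{T}.
\end{align*}
The second equation yields $\boldsymbol{\pi}_B^{T}=\boldsymbol{\pi}_A^{T}P_{AB}\widehat{P}_{BB}$, and substituting into the first gives $\boldsymbol{\pi}_A^{T}P^{(A)}=\boldsymbol{\pi}_A^{T}$. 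Normalizing by $\sum_{j\in A}\pi(j)$, which is finite and positive, produces the claimed $\boldsymbol{\pi}^{(A)}$; the irreducible chain $\mathbf{\Phi}^{(A)}$ therefore admits a proper invariant probability vector and is positive recurrent.

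The main technical obstacle I anticipate is the rigorous handling of the infinite matrix operations when $B$ is countably infinite: namely, interchanging sums in the excursion decomposition, and justifying $(I-P_{BB})\widehat{P}_{BB}=I$ entrywise for the substochastic matrix $P_{BB}$. Both are underwritten by the fact that positive recurrence of $\mathbf{\Phi}$ forces the taboo Green kernel $\widehat{P}_{BB}$ to have finite entries and to satisfy the expected algebraic identity; once this is in place, the rest of the argument is algebraic.
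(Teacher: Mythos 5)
Your proposal is correct in outline, and it is worth noting that the paper itself supplies no proof of this lemma at all: the statement is quoted from Section~5 of \cite{GV99}, so there is nothing internal to compare against except the computation the paper performs immediately after the lemma, where the balance equation is partitioned as in (\ref{partition-P}) and postmultiplied by $\widehat{P}_{BB}$ to obtain $\boldsymbol{\pi}_B^{T}=\boldsymbol{\pi}_A^{T}P_{AB}\widehat{P}_{BB}$ in (\ref{pi-0})--(\ref{pi-1}); your third step reproduces exactly that derivation, and your observation that an irreducible chain with an invariant probability vector is positive recurrent correctly closes the loop. Your excursion decomposition for (\ref{P-E}) and the path-splitting argument for irreducibility are the standard ones. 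Two points should be tightened if you write this out in full. First, finiteness of $\widehat{P}_{BB}$ does not follow from $(P_{BB}^{k}\boldsymbol e)(l)\downarrow 0$ alone: decay of the probability of remaining in $B$ for $k$ steps does not imply summability of $\sum_{k}P_{BB}^{k}(l,m)$. The clean argument is that, by recurrence and irreducibility, from any $m\in B$ the chain reaches $A$ with probability one, hence reaches $A$ before returning to $m$ with some probability $q_m>0$; the number of visits to $m$ before hitting $A$ is then dominated by a geometric variable, so $\widehat{P}_{BB}(l,m)\le 1/q_m<\infty$ (the paper's Remark~\ref{P_BB}(ii), which attributes finiteness to strict substochasticity of $P_{BB}$, is similarly loose when $B$ is infinite). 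Second, the postmultiplication of $\boldsymbol{\pi}_B^{T}(I-P_{BB})=\boldsymbol{\pi}_A^{T}P_{AB}$ by $\widehat{P}_{BB}$ involves the signed matrix $I-P_{BB}$, so rather than invoking formal associativity you should iterate $\boldsymbol{\pi}_B^{T}=\boldsymbol{\pi}_A^{T}P_{AB}\sum_{k=0}^{n}P_{BB}^{k}+\boldsymbol{\pi}_B^{T}P_{BB}^{n+1}$ and note that $\boldsymbol{\pi}_B^{T}P_{BB}^{n+1}\boldsymbol e\to 0$ by dominated convergence, since the exit time from $B$ is almost surely finite under $\boldsymbol{\pi}_B$. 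With these two repairs your argument is complete and is essentially the standard proof that the cited reference gives.
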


\begin{remark}\label{P_BB}
 (i) $\widehat{P}_{BB}$ is the minimal nonnegative solution of
  \[X(I-P_{BB})=(I-P_{BB})X=I.\]

(ii) $\widehat{P}_{BB}$ is finite since $P_{BB}$ is strictly substochastic.

(iii) $\widehat{P}_{BB}(i,j)$ is the expected number of visits to state $j\in B$ before entering $A$ given that the process starts from state $i\in B$, i.e.
  \begin{equation}\label{pro-P_BB}
\widehat{P}_{BB}(i,j)=\mathbb{E}_{i}\bigg[\sum_{k=0}^{\tau_{A}-1}{\mathbb{I}}(\Phi_k=j)\bigg],\ \ i,j\in  B,
\end{equation}
where $\tau_A:=\inf\{k\geq 1:\Phi_k\in A\}$  is the first return time to the set $A$.
\end{remark}

On the contrary,  we can also obtain the invariant probability vector $\boldsymbol\pi$ when we know  $\boldsymbol\pi^{(A)}$ and  $\widehat{P}_{BB}$.
Let us partition  $\boldsymbol{\pi}^T$ as $(\boldsymbol{\pi}_A^T, \boldsymbol{\pi}_B^T)$.
From (\ref{partition-P}) and $\boldsymbol{\pi}^{T}P=\boldsymbol{\pi}^{T}$, we have
\[\boldsymbol{\pi}_A^{T}P_{AB}+\boldsymbol{\pi}_B^{T}P_{BB}=\boldsymbol{\pi}_B^{T}.\]
Then,
\begin{equation}\label{pi-0}
\boldsymbol{\pi}_B^{T}(I-P_{BB})=\boldsymbol{\pi}_A^{T}P_{AB}.
\end{equation}
Postmultiplying both sides of (\ref{pi-0}) by $\widehat{P}_{BB}$, we obtain
\begin{equation}\label{pi-1}
\boldsymbol{\pi}_B^{T}=\boldsymbol{\pi}_A^{T}P_{AB}\widehat{P}_{BB}.
\end{equation}
According to $\boldsymbol\pi^T\boldsymbol e=1 $, we have
\begin{equation}\label{pi-2}
\boldsymbol{\pi}_A^T\left[I, P_{AB}\widehat{P}_{BB}\right]\boldsymbol e=1.
\end{equation}
Lemma \ref{cen-p} and  (\ref{pi-2}) yield
\begin{equation}\label{the constant-c}
\sum_{i\in A}\pi(i)=\left(\left(\boldsymbol\pi^{(A)}\right)^T\left[I, P_{AB}\widehat{P}_{BB}\right]\boldsymbol e\right)^{-1}.
\end{equation}
Thus, the  invariant probability vector $\boldsymbol\pi$ can be obtained by using (\ref{Pi-A}), (\ref{pi-1}) and  (\ref{the constant-c}).

\section{General Markov chains}
\label{sec:general}

In this section,  we will use matrix-analytic methods to solve  Poisson's equation (\ref{poi-fun-1}) for general Markov chains.
Let $O$  denote the zero matrix  with appropriate numbers of rows and columns.
Similar, the matrix $I$ and vector $\boldsymbol e$ defined previously will adapt to the dimensions in the following analysis.
Furthermore, let us partition the matrix $X$ as
\begin{equation}\label{partition-X}
X=\left[
\begin{array}{c}
X_A \\
X_B \\
\end{array}
\right].
\end{equation}

\begin{theorem}\label{the-result-general-MC}
Let $\mathbf{\Phi}$ be an irreducible and positive recurrent Markov chain and let $A$ be a finite non-empty subset of $E$.
Then, the matrix $\widetilde{X}$, given by
\begin{equation}\label{the-matrix-K-A}
\widetilde{X}_{A}=\left(I-{P}^{(A)}\right)^{\#}\left(\left[I, P_{AB}\widehat{P}_{BB}\right]-\left(\boldsymbol e+P_{AB}\widehat{P}_{BB}\boldsymbol e\right)\boldsymbol\pi^T\right)
\end{equation}
and
\begin{equation}\label{the-matrix-K-B}
\widetilde{X}_{B}=\widehat{P}_{BB}P_{BA}\widetilde{X}_A+\left[O,\widehat{P}_{BB}\right]-\widehat{P}_{BB}\boldsymbol e \boldsymbol \pi^T,
\end{equation}
is one solution of  Poisson's equation (\ref{poi-fun-1}).
Moreover, if $\mathbb{E}_{\boldsymbol\pi}\left[\tau_i\right]<\infty$ for some $i\in E$, then the matrix  $\widetilde{X}$
is the unique matrix solution of Poisson's equation (\ref{poi-fun-1}) in the set of matrices $X$ such that $\boldsymbol{\pi}^T|X|<\infty$.
\end{theorem}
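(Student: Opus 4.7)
The plan is to verify $(I-P)\widetilde{X} = I - \boldsymbol{e}\boldsymbol{\pi}^T$ by direct block computation using the partition $E = A \cup B$ with $B = A^{c}$, and then to obtain uniqueness from Lemma \ref{uniqueness-solution}. In block form, Poisson's equation splits into an $A$-row equation
\[(I - P_{AA})\widetilde{X}_A - P_{AB}\widetilde{X}_B = [\,I - \boldsymbol{e}\boldsymbol{\pi}_A^T,\; -\boldsymbol{e}\boldsymbol{\pi}_B^T\,]\]
and a $B$-row equation
\[-P_{BA}\widetilde{X}_A + (I - P_{BB})\widetilde{X}_B = [\,-\boldsymbol{e}\boldsymbol{\pi}_A^T,\; I - \boldsymbol{e}\boldsymbol{\pi}_B^T\,],\]
which I would verify separately for the proposed $\widetilde{X}$.

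The $B$-block is the easy half. Substituting \eqref{the-matrix-K-B} into the left-hand side and applying $(I - P_{BB})\widehat{P}_{BB} = I$ from Remark \ref{P_BB}(i), the $P_{BA}\widetilde{X}_A$ contributions cancel and the remainder reduces at once to $[\,-\boldsymbol{e}\boldsymbol{\pi}_A^T,\; I - \boldsymbol{e}\boldsymbol{\pi}_B^T\,]$, irrespective of the form of $\widetilde{X}_A$. For the $A$-block, substituting the same expression for $\widetilde{X}_B$ and invoking the censored-chain identity $P^{(A)} = P_{AA} + P_{AB}\widehat{P}_{BB} P_{BA}$ from Lemma \ref{cen-p} collapses the equation to
\[(I - P^{(A)})\,\widetilde{X}_A \;=\; [\,I,\; P_{AB}\widehat{P}_{BB}\,] - (\boldsymbol{e} + P_{AB}\widehat{P}_{BB}\boldsymbol{e})\boldsymbol{\pi}^T \;=:\; Y.\]
Since $A$ is finite, the group inverse of $I - P^{(A)}$ exists and satisfies $(I - P^{(A)})(I - P^{(A)})^{\#} = I - \boldsymbol{e}(\boldsymbol{\pi}^{(A)})^T$, so formula \eqref{the-matrix-K-A} will solve the reduced equation as soon as $(\boldsymbol{\pi}^{(A)})^T Y = 0$. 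This compatibility follows by combining $\boldsymbol{\pi}^{(A)} = \boldsymbol{\pi}_A / (\boldsymbol{\pi}_A^T \boldsymbol{e})$ from Lemma \ref{cen-p} with $\boldsymbol{\pi}_B^T = \boldsymbol{\pi}_A^T P_{AB}\widehat{P}_{BB}$ from \eqref{pi-1}: a short computation gives $(\boldsymbol{\pi}^{(A)})^T[\,I,\; P_{AB}\widehat{P}_{BB}\,] = \boldsymbol{\pi}^T / (\boldsymbol{\pi}_A^T \boldsymbol{e})$ and $(\boldsymbol{\pi}^{(A)})^T(\boldsymbol{e} + P_{AB}\widehat{P}_{BB}\boldsymbol{e}) = 1 / (\boldsymbol{\pi}_A^T \boldsymbol{e})$, so the two contributions in $Y$ cancel.

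For uniqueness under the hypothesis $\mathbb{E}_{\boldsymbol\pi}[\tau_i]<\infty$, Lemma \ref{uniqueness-solution} says that any two solutions with $\boldsymbol{\pi}^T|X|<\infty$ differ by $\boldsymbol{e}\boldsymbol{\beta}^T$, which is exactly the asserted uniqueness-up-to-a-constant-row matrix. The main obstacle is showing that $\widetilde{X}$ itself lies in the class $\boldsymbol{\pi}^T|X|<\infty$, since $\widetilde{X}_B$ involves the infinite matrix $\widehat{P}_{BB}$. The cleanest route I would take is to fix $\alpha \in A$ and first establish $\boldsymbol{\pi}^T|K|<\infty$ for the additive-type functional matrix $K$ of Lemma \ref{expected-additive-type-functional}, whose entries are dominated in absolute value by $\mathbb{E}_i[\tau_\alpha] + \pi(j)\,\mathbb{E}_i[\tau_\alpha]$ via \eqref{equ-add-functional}, so that the hypothesis directly yields the required integrability of each column; the difference $\widetilde{X} - K$ is column-wise $P$-harmonic, and a standard positive-recurrence argument (or a direct comparison against the block representations \eqref{the-matrix-K-A}--\eqref{the-matrix-K-B}) shows it has the form $\boldsymbol{e}\boldsymbol{\beta}^T$, from which $\boldsymbol{\pi}^T|\widetilde{X}|<\infty$ follows.
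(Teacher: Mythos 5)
Your existence argument is sound and is essentially the paper's own computation run in the verification direction: the paper also splits Poisson's equation (\ref{poi-fun-1}) into the $A$- and $B$-row blocks, eliminates $X_B$ via $\widehat{P}_{BB}$, and uses $P^{(A)}=P_{AA}+P_{AB}\widehat{P}_{BB}P_{BA}$; your explicit check of the compatibility condition $(\boldsymbol{\pi}^{(A)})^T\bigl([I,\,P_{AB}\widehat{P}_{BB}]-(\boldsymbol e+P_{AB}\widehat{P}_{BB}\boldsymbol e)\boldsymbol\pi^T\bigr)=\boldsymbol 0^T$ is a welcome piece of rigor that the paper leaves implicit when it invokes (\ref{finite-state}).

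The gap is in your uniqueness step. Your primary route — note that $H:=\widetilde{X}-K$ is column-wise harmonic and conclude $H=\boldsymbol{e}\boldsymbol{\beta}^T$ by ``a standard positive-recurrence argument'' — does not go through: irreducible positive recurrent chains do admit non-constant harmonic functions (they are necessarily unbounded; e.g.\ an asymmetric random walk on $\mathbb{Z}$ with drift toward the origin has a two-dimensional space of harmonic functions), so harmonicity alone gives nothing. To kill $H$ you need boundedness or $\boldsymbol{\pi}$-integrability of $H$, and the only integrability you have established is $\boldsymbol{\pi}^T|K|<\infty$; requiring $\boldsymbol{\pi}^T|\widetilde{X}|<\infty$ at this point is exactly what you are trying to prove, so the argument is circular. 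Your parenthetical fallback — a direct block comparison showing $\widetilde{X}=K+\boldsymbol{e}\boldsymbol{\beta}^T$ without any moment condition — is viable, but it is a substantive computation (it is precisely the paper's Theorem \ref{result-expression-add-functional}: one must evaluate $N_A$ and $N_B$ probabilistically as in (\ref{N_A}) and (\ref{N_B}) and match them against (\ref{the-matrix-K-A})--(\ref{the-matrix-K-B})), and you have not carried it out. The paper avoids $K$ entirely here: it bounds $\boldsymbol{\pi}^T|\widetilde{X}|$ directly, using that $A$ is finite for the $\widetilde{X}_A$ rows, and that the rows of $\widetilde{X}_B$ consist of the stochastic kernel $\widehat{P}_{BB}P_{BA}$ applied to the finite block $\widetilde{X}_A$ plus terms whose absolute row sums are controlled by $\mathbb{E}_i[\tau_A]$ (Remark \ref{P_BB}(iii)), so that $\boldsymbol{\pi}^T|\widetilde{X}|\leq \gamma+2\mathbb{E}_{\boldsymbol\pi}[\tau_A]<\infty$, after which Lemma \ref{uniqueness-solution} finishes the proof. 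Either adopt that direct estimate or prove the $\widetilde{X}=K+\boldsymbol{e}\boldsymbol{\beta}^T$ identity in full; as written, the uniqueness half is incomplete.
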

\begin{proof}We first prove that $\widetilde{X}$ satisfies Poisson's equation (\ref{poi-fun-1}).
From (\ref{partition-P}) and (\ref{partition-X}), we rewrite Poisson's equation (\ref{poi-fun-1}) as
\begin{eqnarray*}
\left[
\begin{array}{cc}
I-P_{AA} & -P_{AB} \\
 -P_{BA}   & I-P_{BB} \\
\end{array}
\right]
\left[
\begin{array}{c}
X_A \\
X_B \\
\end{array}
\right] =
I-\boldsymbol{e\pi}^{T}.
\end{eqnarray*}
We obtain
\begin{equation}\label{X_0}
  (I-P_{AA}) X_A =P_{AB} X_B + \left[I,O\right]-{\boldsymbol{e\pi}^{T}}
\end{equation}
and
\begin{equation}\label{X_1}
  (I-P_{BB}) X_B =P_{BA} X_A + \left[O,{I}\right]-\boldsymbol{e\pi}^{T}.
\end{equation}
Premultiplying both sides of (\ref{X_1}) by $\widehat{P}_{BB}$ gives us
\begin{eqnarray}\label{X_B_0}
X_B=\widehat{P}_{BB}P_{BA}  X_A +  \widehat{P}_{BB}\left[O,I\right]-\widehat{P}_{BB}{\boldsymbol{e\pi}^{T}}.
\end{eqnarray}
Substituting (\ref{X_B_0}) into (\ref{X_0}), we have
\begin{eqnarray*}
 (I-P_{AA}) X_A &=& P_{AB}\widehat{P}_{BB}P_{BA}  X_A +  \left[O,P_{AB}\widehat{P}_{BB}\right]-P_{AB}\widehat{P}_{BB}{\boldsymbol{e\pi}^{T}}+ \left[I,O\right]-{\boldsymbol{e\pi}^{T}}\\
   &=&   P_{AB}\widehat{P}_{BB}P_{BA}  X_A+ \left[I,P_{AB}\widehat{P}_{BB}\right]-\left(\boldsymbol{e}+P_{AB}\widehat{P}_{BB}\boldsymbol{e}\right){\boldsymbol{\pi}^{T}}.
\end{eqnarray*}
Thus,  we have
\begin{equation}\label{X_A_0}
\left(I-P^{(A)}\right) X_{A}= \left[I,P_{AB}\widehat{P}_{BB}\right]-\left(\boldsymbol{e}+P_{AB}\widehat{P}_{BB}\boldsymbol{e}\right){\boldsymbol{\pi}^{T}}.
\end{equation}
It follows from (\ref{finite-state}) and (\ref{X_A_0}) that
\begin{equation}\label{X_A}
  X_{A}=\left(I-P^{(A)}\right)^{\#}\left( \left[I,P_{AB}\widehat{P}_{BB}\right]-\left(\boldsymbol{e}+P_{AB}\widehat{P}_{BB}\boldsymbol{e}\right){\boldsymbol{\pi}^{T}}\right)+{\boldsymbol{e\beta}^{T}}=\widetilde{X}_A+{\boldsymbol{e\beta}^{T}}.
\end{equation}
From (\ref{X_B_0}) and (\ref{X_A}), we obtain
\[
X_B=\widehat{P}_{BB}P_{BA} \left( \widetilde{X}_A+{\boldsymbol{e\beta}^{T}}\right) +  \widehat{P}_{BB}\left[O,I\right]-\widehat{P}_{BB}{\boldsymbol{e\pi}^{T}}.
\]
According to Remark \ref{P_BB} (iii), we know that $\widehat{P}_{BB}P_{BA}$ denotes the probability of first hitting $A$ from $B$.  We thus have
\[\widehat{P}_{BB}P_{BA}{\boldsymbol{e}}={\boldsymbol{e}}.\]
Then, we obtain
\begin{eqnarray}\label{X_B}
X_{B} =\widehat{P}_{BB}P_{BA}\widetilde{X}_A +  \left[O,\widehat{P}_{BB}\right]-\widehat{P}_{BB}{\boldsymbol{e\pi}^{T}}+{\boldsymbol{e\beta}^{T}}= \widetilde{X}_{B}+ \boldsymbol{e\beta}^{T}.
\end{eqnarray}

Combining (\ref{X_A}) with (\ref{X_B}), we find that ${X}=\widetilde{X}+\boldsymbol{e\beta}^{T}$, i.e. the matrix $\widetilde{X}$ is one solution of Poisson's equation (\ref{poi-fun-1}).

Now, we prove that under the condition of $\mathbb{E}_{\boldsymbol\pi}[\tau_i]<\infty$,  $\widetilde{X}$ defined by (\ref{the-matrix-K-A}--\ref{the-matrix-K-B}) satisfies $\boldsymbol{\pi}^T|\widetilde{X}|<\infty$, i.e.
\[\sum_{i\in E}\pi(i)\left|\widetilde{X}(i,n)\right|<\infty, \ \ n\in E.\]
Since $A$ is a finite set, we can find large enough positive constant $\gamma_n$ for any $n\in E$, such that
\[\sum_{i\in A}\left|\widetilde{X}(i,n)\right|<\gamma_n<\infty.\]
Since $\mathbf{\Phi}$ is irreducible and $\mathbb{E}_{\boldsymbol\pi}[\tau_i]<\infty$,
this shows that $\mathbb{E}_{\boldsymbol\pi}[\tau_A]<\infty$. Thus, we have
\begin{eqnarray*}
  \sum_{i\in E}\pi(i)\left|\widetilde{X}(i,n)\right| &\leq& \gamma_n\boldsymbol{\pi}_A^T\boldsymbol{e}+ \sum_{i\in B}\pi(i)\left|\widetilde{X}(i,n)\right|\\
   &\leq&  \gamma_n\boldsymbol{\pi}_A^T\boldsymbol{e}+\sum_{i\in B}\pi(i)\sum_{j\in A}\mathbb{P}_{i}(\tau_A=j)\left|\widetilde{X}(j,n)\right|\\
   &&+\sum_{i\in B}\pi(i)\mathbb{E}_{i}\left[\tau_{A}\right] +\sum_{i\in B}\pi(i)\mathbb{E}_{i}\bigg[\sum_{k=0}^{\tau_{A}-1}\mathbb{I}(\Phi_k\in B)\bigg]\\
   &\leq&\gamma_n\boldsymbol{\pi}_A^T\boldsymbol{e}+\gamma_n\boldsymbol{\pi}_B^T\boldsymbol{e}+2\mathbb{E}_{\boldsymbol\pi}[\tau_A]\\
   &=&\gamma_n+2\mathbb{E}_{\boldsymbol\pi}[\tau_A]<\infty,
\end{eqnarray*}
where $\mathbb{P}_{i}(\cdot)$ denotes the  conditional probability with respect to the  initial state $i$.

By Lemma \ref{uniqueness-solution}, all matrix solutions  $X$ of $(I-P)X = I-\boldsymbol{e\pi}^T$
 such that $\boldsymbol{\pi}^T|X|<\infty$ are given by $X = \widetilde{X} +\boldsymbol{e\beta}^{T}$ for some constant vector $\boldsymbol{\beta}$. This completes the proof of Theorem \ref{the-result-general-MC}.
\end{proof}

\begin{remark}
For an irreducible and  positive recurrent Markov chain $\mathbf{\Phi}$,
$\mathbb{E}_{\boldsymbol\pi}[\tau_i]<\infty$ for some (then for every)  $i \in E$ is equivalent to $\mathbb{E}_{i}[\tau_i^2]<\infty$, see Lemma 2.6 in \cite{{DLL13}}.
\end{remark}

Under the uniqueness assumption, we can use the matrix solution $\widetilde{X}$ to characterize the deviation matrix $D$.

\begin{corollary}\label{expression-D}
Let $\mathbf{\Phi}$ be an irreducible, positive recurrent and aperiodic  Markov chain. If $\mathbb{E}_{\boldsymbol\pi}\left[\tau_i\right]<\infty$ for some $i\in E$, then
 $D=(I-\boldsymbol{e\pi}^{T})\widetilde{X}$,
where $\widetilde{X}$ is determined by (\ref{the-matrix-K-A})--(\ref{the-matrix-K-B}).
\end{corollary}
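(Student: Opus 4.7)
The plan is to use the uniqueness statement in Theorem \ref{the-result-general-MC} (equivalently, Lemma \ref{uniqueness-solution}) and the well-known normalization $\boldsymbol{\pi}^T D = \boldsymbol{0}^T$ of the deviation matrix. Both $D$ and $\widetilde{X}$ will be shown to be solutions of Poisson's equation $(I-P)X = I - \boldsymbol{e\pi}^T$ that lie in the class $\{X : \boldsymbol{\pi}^T |X| < \infty\}$, so they must coincide up to a term of the form $\boldsymbol{e\beta}^T$. Hitting that identity with the projector $I - \boldsymbol{e\pi}^T$ should kill the ambiguity and yield the claim.

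First, I would verify that under the stated hypotheses the deviation matrix $D$ is well defined and satisfies $(I-P)D = I - \boldsymbol{e\pi}^T$ together with $\boldsymbol{\pi}^T D = \boldsymbol{0}^T$. The aperiodicity, irreducibility, and positive recurrence assumptions ensure that $P^n \to \boldsymbol{e\pi}^T$ entrywise, while $\mathbb{E}_{\boldsymbol\pi}[\tau_i]<\infty$ (for some, hence every, $i\in E$) gives absolute convergence of the series in (\ref{definition-D}) together with the integrability $\boldsymbol{\pi}^T|D|<\infty$, via the identities (\ref{element-D-1})--(\ref{element-D-2}) that bound $|D|$ by quantities of order $\pi(i)\mathbb{E}_j[\tau_i]$ whose $\boldsymbol\pi$-average is controlled by $\mathbb{E}_{\boldsymbol\pi}[\tau_i]$. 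These facts are standard and are cited in the excerpt preceding the corollary.

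Next, since Theorem \ref{the-result-general-MC} already gives $(I-P)\widetilde{X} = I - \boldsymbol{e\pi}^T$ with $\boldsymbol{\pi}^T|\widetilde{X}|<\infty$, Lemma \ref{uniqueness-solution} applies to the pair $(D,\widetilde{X})$ and yields a constant vector $\boldsymbol\beta$ such that
\begin{equation*}
D = \widetilde{X} + \boldsymbol{e\beta}^T.
\end{equation*}
Premultiplying by $I - \boldsymbol{e\pi}^T$, the left-hand side becomes $D - \boldsymbol{e}(\boldsymbol{\pi}^T D) = D$ using $\boldsymbol{\pi}^T D = \boldsymbol{0}^T$, while on the right the spurious term is annihilated because $(I - \boldsymbol{e\pi}^T)\boldsymbol{e} = \boldsymbol{e} - \boldsymbol{e}(\boldsymbol{\pi}^T\boldsymbol{e}) = \boldsymbol{0}$. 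This leaves exactly $D = (I - \boldsymbol{e\pi}^T)\widetilde{X}$.

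The only non-routine step is the verification that $\boldsymbol{\pi}^T|D|<\infty$, which is precisely what forces the hypothesis $\mathbb{E}_{\boldsymbol\pi}[\tau_i]<\infty$ to be imported into the corollary; everything else is algebraic cleanup. No other subtlety arises, so the proof should be short.
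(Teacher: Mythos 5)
Your proposal is correct and follows essentially the same route as the paper: verify that $D$ exists, solves Poisson's equation with $\boldsymbol{\pi}^T D=\boldsymbol{0}^T$ and $\boldsymbol{\pi}^T|D|<\infty$ (via (\ref{element-D-1})--(\ref{element-D-2})), then invoke the uniqueness part of Theorem \ref{the-result-general-MC} to get $D=\widetilde{X}+\boldsymbol{e\beta}^T$ and eliminate $\boldsymbol\beta$ using $\boldsymbol{\pi}^TD=\boldsymbol{0}^T$. The only cosmetic difference is that you apply the projector $I-\boldsymbol{e\pi}^T$ while the paper solves $\boldsymbol\beta^T=-\boldsymbol{\pi}^T\widetilde{X}$ directly, which is the same computation.
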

\begin{proof}
Since $\mathbb{E}_{\boldsymbol\pi}\left[\tau_i\right]<\infty$, we know that $D$ exists and $D$ is one solution of Poisson's equation  (\ref{poi-fun-1}).
Moreover, it follows from (\ref{element-D-1})--(\ref{element-D-2}) that
$$\sum_{i\in E}\pi(i)|D(i,n)|\leq |D(n,n)|+\pi(n)\mathbb{E}_{\boldsymbol\pi}\left[\tau_n\right]<\infty$$
for any $n\in E$, i.e. $\boldsymbol\pi|D|<\infty$. From Theorem \ref{the-result-general-MC}, we have
\[D=\widetilde{X}+\boldsymbol{e\beta}^T.\]
Since  $\boldsymbol{\pi}^TD=\boldsymbol{0}$, we obtain $\boldsymbol{\beta}=-\boldsymbol{\pi}^T\widetilde{X}$. Thus, this proof is completed.
\end{proof}

In fact, we could have derived the matrix $K$ under the same condition $\mathbb{E}_{\boldsymbol\pi}\left[\tau_i\right]<\infty$ by using the same averments of Corollary \ref{expression-D}.
However, we can derive the matrix $K$ without the condition $\mathbb{E}_{\boldsymbol\pi}\left[\tau_i\right]<\infty$ by using different arguments.

For a finite non-empty subset $A$ of $E$, let $N=(N(i,j))_{i,j\in E}$ denote a matrix such that
\begin{equation*}
N(i,j):=\mathbb{E}_{i}\bigg[\sum_{k=0}^{\tau_{A}-1}\overline{\mathbb{I}}(\Phi_k=j)\bigg], \ \ i,j\in E.
\end{equation*}
The following lemma reveals a relationship between the matrix $K$ and the matrix $N$.
\begin{lemma}\label{additive-function-set}
Let $\mathbf{\Phi}$ be an irreducible and positive recurrent Markov chain and let $A$ be a finite non-empty subset of $E$.
Then, for any fixed  state $\alpha\in A$,  we have
\begin{equation}\label{expression-add-functional-1}
K(i,j) =N(i,j)+\sum_{l\in A}\mathbb{P}_{i}(\Phi_{\tau_A}=l)K(l,j),\ \ i,j\in E.
\end{equation}
\end{lemma}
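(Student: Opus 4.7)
The plan is to decompose the sum appearing in the definition of $K(i,j)$ at the stopping time $\tau_A$, and then apply the strong Markov property to identify the post-$\tau_A$ contribution with $K(l,j)$, where $l=\Phi_{\tau_A}$. Since $\alpha\in A$, we always have $\tau_A\leq \tau_\alpha$ almost surely, so the decomposition
\[
\sum_{k=0}^{\tau_\alpha-1}\overline{\mathbb{I}}(\Phi_k=j)
=\sum_{k=0}^{\tau_A-1}\overline{\mathbb{I}}(\Phi_k=j)+\sum_{k=\tau_A}^{\tau_\alpha-1}\overline{\mathbb{I}}(\Phi_k=j)
\]
is valid pathwise (with the convention that the second sum is empty when $\tau_A=\tau_\alpha$).

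Taking $\mathbb{E}_i[\cdot]$ term by term, the first sum produces exactly $N(i,j)$ by definition. For the second sum, I would condition on $\mathcal{F}_{\tau_A}$ and use the strong Markov property: on the event $\{\Phi_{\tau_A}=l\}$, the shifted process $\{\Phi_{\tau_A+k}\}_{k\geq 0}$ is a Markov chain with transition matrix $P$ started at $l$, independent of $\mathcal{F}_{\tau_A}$. Moreover, the first hitting time of $\alpha$ after $\tau_A$ coincides with the shifted $\tau_\alpha$, since prior to $\tau_A$ the chain has not visited $A$ and therefore has not visited $\alpha$. Hence
\[
\mathbb{E}_i\bigg[\sum_{k=\tau_A}^{\tau_\alpha-1}\overline{\mathbb{I}}(\Phi_k=j)\,\bigg|\,\Phi_{\tau_A}=l\bigg]
=\mathbb{E}_l\bigg[\sum_{k=0}^{\tau_\alpha-1}\overline{\mathbb{I}}(\Phi_k=j)\bigg]=K(l,j).
\]
Summing over $l\in A$ weighted by $\mathbb{P}_i(\Phi_{\tau_A}=l)$ then yields the claimed identity. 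In the boundary case $l=\alpha$, the corresponding inner sum is empty and $K(\alpha,j)=0$ by Lemma~\ref{expected-additive-type-functional}, so both sides agree and no special treatment is needed.

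The only non-routine point is justifying the interchange of expectation and conditional decomposition; this requires that $\tau_A$ and $\tau_\alpha$ be $\mathbb{P}_i$-a.s.\ finite, which follows from irreducibility and positive recurrence of $\mathbf{\Phi}$, together with the fact that $|\overline{\mathbb{I}}(\Phi_k=j)|\leq 1$, so that $\sum_{k=0}^{\tau_\alpha-1}|\overline{\mathbb{I}}(\Phi_k=j)|\leq \tau_\alpha$ and Fubini applies once we know $\mathbb{E}_i[\tau_\alpha]<\infty$; here it suffices to observe that $K(i,j)$ is well defined by hypothesis (it is written down in \eqref{equ-add-functional}), after which the equality is a purely pathwise manipulation followed by conditioning on $\Phi_{\tau_A}$.
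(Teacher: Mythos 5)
Your proof is correct and follows essentially the same route as the paper: split the defining sum for $K(i,j)$ at $\tau_A$ (valid since $\alpha\in A$ forces $\tau_A\leq\tau_\alpha$), identify the pre-$\tau_A$ part with $N(i,j)$, apply the strong Markov property at $\tau_A$ to turn the remainder into $\sum_{l}\mathbb{P}_i(\Phi_{\tau_A}=l)K(l,j)$, and absorb the $l=\alpha$ term via $K(\alpha,j)=0$ from Lemma~\ref{expected-additive-type-functional}. Your added remark on integrability (via $\mathbb{E}_i[\tau_\alpha]<\infty$ under positive recurrence) is a harmless refinement the paper leaves implicit.
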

\begin{proof}
Since $\alpha\in A$, $\tau_\alpha=\max\{\tau_\alpha,\tau_A\}$.  Using (\ref{equ-add-functional}) and the strong Markov property,
we obtain
\begin{eqnarray*}
   K(i,j) &=&\mathbb{E}_{i}\bigg[\sum_{k=0}^{\max\{\tau_\alpha,\tau_A\}-1}{\overline{\mathbb{I}}}(\Phi_k=j)\bigg] \\
   &=& \mathbb{E}_{i}\bigg[\sum_{k=0}^{\tau_A-1}{\overline{\mathbb{I}}}(\Phi_k=j)\bigg]
   +\left(\mathbb{E}_{i}\bigg[\sum_{k=\tau_A}^{\tau_\alpha-1}{\overline{\mathbb{I}}}(\Phi_k=j)\bigg]\right)\mathbb{I}(\tau_\alpha>\tau_A)\\
   &=&N(i,j)+\sum_{l\in A,l\neq\alpha}\mathbb{P}_{i}(\Phi_{\tau_A}=l)\left(\mathbb{E}_{l}\bigg[\sum_{k=0}^{\tau_\alpha-1}{\overline{\mathbb{I}}}(\Phi_k=j)\bigg]\right)\\
   &=&N(i,j)+\sum_{l\in A,l\neq\alpha}\mathbb{P}_{i}(\Phi_{\tau_A}=l)K(l,j).
\end{eqnarray*}
From Lemma \ref{expected-additive-type-functional}, we have
\[K(\alpha, j)=0.\]
Thus,
\[K(i,j)=N(i,j)+\sum_{l\in A}\mathbb{P}_{i}(\Phi_{\tau_A}=l)K(l,j).\]

The proof is completed.
\end{proof}

\begin{theorem}\label{result-expression-add-functional}
Let $\mathbf{\Phi}$ be an irreducible and positive recurrent Markov chain and let $A$ be a finite non-empty subset of $E$.
Then, for any fixed state $\alpha\in A$,
we have $K=\widetilde{X}-\boldsymbol{e}\boldsymbol{\beta}^T$,
where $\beta(j)=\widetilde{X}(\alpha,j)$ for any $j\in E$ and $\widetilde{X}$ is determined by (\ref{the-matrix-K-A})--(\ref{the-matrix-K-B}).
\end{theorem}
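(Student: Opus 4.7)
The plan is to exploit Lemma~\ref{additive-function-set} to reduce the identification of $K$ to a finite-dimensional Poisson equation on $A$, and then propagate the answer back to $B$ by the censoring relations already used in the proof of Theorem~\ref{the-result-general-MC}. The key observation is that the matrix $N$ in Lemma~\ref{additive-function-set} can be written explicitly in terms of $\widehat{P}_{BB}$ and $\boldsymbol{\pi}$, and its block of rows indexed by $A$ is precisely the matrix in the parentheses of (\ref{the-matrix-K-A}).

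First I would compute $N$ blockwise. Splitting the column index $j$ into $A$ and $B$ and using the facts that (i) the chain never visits $A$ during $\{0,\dots,\tau_A-1\}$ when started from $B$, and (ii) $\mathbb{E}_B[\tau_A]=\widehat{P}_{BB}\boldsymbol e$ and $\mathbb{E}_A[\tau_A]=\boldsymbol e+P_{AB}\widehat{P}_{BB}\boldsymbol e$ by one-step conditioning, one obtains
\begin{equation*}
N_{A\cdot}=\bigl[I,\,P_{AB}\widehat{P}_{BB}\bigr]-\bigl(\boldsymbol e+P_{AB}\widehat{P}_{BB}\boldsymbol e\bigr)\boldsymbol{\pi}^T,\qquad
N_{B\cdot}=\bigl[O,\,\widehat{P}_{BB}\bigr]-\widehat{P}_{BB}\boldsymbol e\boldsymbol{\pi}^T.
\end{equation*}
The hitting distribution on $A$ is also read off from the censoring: $\mathbb{P}_i(\Phi_{\tau_A}=l)=P^{(A)}(i,l)$ for $i\in A$, and $\mathbb{P}_i(\Phi_{\tau_A}=l)=(\widehat{P}_{BB}P_{BA})(i,l)$ for $i\in B$. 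Substituting these and the block decomposition of $N$ into (\ref{expression-add-functional-1}) yields a finite-dimensional equation for $K_A$, namely $(I-P^{(A)})K_A=N_{A\cdot}$, and a purely algebraic expression $K_B=\widehat{P}_{BB}P_{BA}K_A+[O,\widehat{P}_{BB}]-\widehat{P}_{BB}\boldsymbol e\boldsymbol\pi^T$.

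Next I would apply the finite-state representation (\ref{finite-state}) to the censored chain on $A$. Since $P^{(A)}$ is finite, irreducible and positive recurrent (Lemma~\ref{cen-p}), the general solution of $(I-P^{(A)})K_A=N_{A\cdot}$ is $K_A=(I-P^{(A)})^{\#}N_{A\cdot}+\boldsymbol e\boldsymbol c^T$ for some constant vector $\boldsymbol c$; comparison with (\ref{the-matrix-K-A}) gives $K_A=\widetilde{X}_A+\boldsymbol e\boldsymbol c^T$. The constraint $K(\alpha,j)=0$ from Lemma~\ref{expected-additive-type-functional} then forces $c(j)=-\widetilde X(\alpha,j)=-\beta(j)$. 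For the $B$-block, substituting $K_A=\widetilde{X}_A-\boldsymbol e\boldsymbol\beta^T$ into the expression for $K_B$ above and using $\widehat{P}_{BB}P_{BA}\boldsymbol e=\boldsymbol e$ (already established in the proof of Theorem~\ref{the-result-general-MC}) collapses the extra term to $-\boldsymbol e\boldsymbol\beta^T$, giving $K_B=\widetilde{X}_B-\boldsymbol e\boldsymbol\beta^T$, which completes the identification.

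The main obstacle I anticipate is a consistency check rather than a bound: one must verify that the equation $(I-P^{(A)})K_A=N_{A\cdot}$ is actually solvable, i.e. that $(\boldsymbol\pi^{(A)})^T N_{A\cdot}=\boldsymbol 0^T$, so that the group-inverse formula is legitimate and no spurious term enters. Using $\boldsymbol\pi^{(A)}\propto\boldsymbol\pi_A$ together with $\boldsymbol\pi_A^T P_{AB}\widehat{P}_{BB}=\boldsymbol\pi_B^T$ from (\ref{pi-1}) reduces this to Kac's identity $\sum_{i\in A}\pi(i)\mathbb{E}_i[\tau_A]=1$, which holds for any irreducible positive recurrent chain. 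Once this is in place the argument bypasses the uniqueness hypothesis $\mathbb{E}_{\boldsymbol\pi}[\tau_i]<\infty$ needed in Corollary~\ref{expression-D}, delivering $K$ in closed form under the sole assumption of positive recurrence.
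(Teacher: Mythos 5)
Your proposal is correct and follows essentially the same route as the paper's own proof: Lemma~\ref{additive-function-set}, the blockwise computation of $N$ giving exactly the bracketed matrices in (\ref{the-matrix-K-A})--(\ref{the-matrix-K-B}), the identification of the hitting distributions with $P^{(A)}$ and $\widehat{P}_{BB}P_{BA}$, the group-inverse representation of $K_A$, the normalization via $K(\alpha,\cdot)=0$, and the propagation to $K_B$ using $\widehat{P}_{BB}P_{BA}\boldsymbol e=\boldsymbol e$. The only addition is your explicit solvability check $(\boldsymbol\pi^{(A)})^T N_{A\cdot}=\boldsymbol 0^T$, a reasonable refinement which the paper leaves implicit when invoking (\ref{finite-state}).
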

\begin{proof}
In (\ref{expression-add-functional-1}), we consider the two cases of $i\in A$ and $i\in B=A^C$, separately.
For the former case,
let  $M_A=(M_A(i,j))_{i,j\in A}$ denote the matrix such that
\[M_A(i,j):=\mathbb{P}_{i}(\Phi_{\tau_A}=j).\]
It is easy to verify  that
\begin{eqnarray}\label{expression-add-functional-2}
 \nonumber M_A &=& P_{AA}+P_{AB}\sum_{n=0}^{\infty}{P}^n_{BB}P_{BA}\\
  \nonumber   &=&  P_{AA}+P_{AB}\widehat{P}_{BB}P_{BA}\\
      &=& P^{(A)}.
\end{eqnarray}
Combining (\ref{expression-add-functional-1}) and (\ref{expression-add-functional-2}), we have
\begin{equation}\label{F_A}
(I-P^{(A)})K_A = N_A.
\end{equation}
It follows from (\ref{finite-state}) and (\ref{F_A}) that
\begin{equation*}\label{expression-add-functional-3}
K_A=(I-{P}^{(A)})^{\#}N_A+\boldsymbol e\boldsymbol\beta^T.
\end{equation*}
Our task now is to solve the matrix $N_A$. For any state $j\in A$, we have
\begin{equation}\label{N_AA}
\mathbb{E}_{i}\bigg[\sum_{k=0}^{\tau_{A}-1}\mathbb{I}(\Phi_k=j)\bigg]=\mathbb{I}(i=j).
\end{equation}
If $j\in B$, it follows from the strong Markov property and Remark \ref{P_BB} (iii)  that
\begin{equation}\label{N_AB}
\mathbb{E}_{i}\bigg[\sum_{k=0}^{\tau_{A}-1}\mathbb{I}(\Phi_k=j)\bigg]=\sum_{l\in B}P_{AB}(i,l)\widehat{P}_{BB}(l,j).
\end{equation}
From the strong Markov property, we have for $i\in A$,
\begin{equation}\label{E-A-tau-A}
 \mathbb{E}_i\left[\tau_{A}\right] = {1}+\sum_{l\in B}P_{AB}(i,l)\mathbb{E}_{l}\left[\tau_A\right].
\end{equation}
According to Remark \ref{P_BB} (iii), it is clear that for $i\in B$,
\begin{equation}\label{E-B-tau-A}
 \mathbb{E}_i\left[\tau_{A}\right]=\sum_{j\in B}\mathbb{E}_{i}\bigg[\sum_{k=0}^{\tau_{A}-1}\mathbb{I}(\Phi_k=j)\bigg]=\sum_{j\in B}\widehat{P}_{BB}(i,j).
\end{equation}
Combining (\ref{N_AA})--(\ref{E-B-tau-A}), we obtain
\begin{equation}\label{N_A}
N_A=\left[I, P_{AB}\widehat{P}_{BB}\right]-\left(\boldsymbol e+P_{AB}\widehat{P}_{BB}\boldsymbol e\right)\boldsymbol\pi^T,
\end{equation}
from which, we have
\begin{equation}\label{K_a}
 K_A=\left(I-{P}^{(A)}\right)^{\#}\left(\left[I, P_{AB}\widehat{P}_{BB}\right]-\left(\boldsymbol e+P_{AB}\widehat{P}_{BB}\boldsymbol e\right)\boldsymbol\pi^T\right)+\boldsymbol e\boldsymbol\beta^T=\widetilde{X}_A+\boldsymbol e\boldsymbol \beta^T .
\end{equation}

Now, we consider the case of $i\in B$.
Let $M_{B}=(M_{B}(i,j))_{i\in B,j\in A}$ denote  the matrix such that
\[M_{B}(i,j):=\mathbb{P}_{i}(X_{\tau_A}=j),\]
and
\begin{equation}\label{M_B}
M_{B} =\sum_{n=0}^{\infty}{P}^n_{BB}P_{BA} =\widehat{P}_{BB}P_{BA}.
\end{equation}
From (\ref{expression-add-functional-1}) and (\ref{K_a}), we have
\begin{eqnarray*}
 K_B &=& N_B+\widehat{P}_{BB}P_{BA}\left(\widetilde{K}_A+{\boldsymbol e}{\boldsymbol \beta}^T\right) \\
   &=& N_B+\widehat{P}_{BB}P_{BA}\widetilde{K}_A+\widehat{P}_{BB}P_{BA}{\boldsymbol e}{\boldsymbol\beta}^T\\
   &=&N_B+\widehat{P}_{BB}P_{BA}\widetilde{K}_A+{\boldsymbol e}{\boldsymbol \beta}^T.
\end{eqnarray*}
For any state $j\in A$, it is easy to see that
\begin{equation}\label{N_BA}
\mathbb{E}_{i}\bigg[\sum_{k=0}^{\tau_{A}-1}\mathbb{I}(\Phi_k=j)\bigg]=0.
\end{equation}
According to (\ref{N_BA}), Remark \ref{P_BB} (iii) and (\ref{E-B-tau-A}), it follows that
\begin{equation}\label{N_B}
N_B=\left[O,\widehat{P}_{BB}\right]-\widehat{P}_{BB}\boldsymbol e \boldsymbol \pi^T,
\end{equation}
from which, we have
\begin{equation*}
K_B = \left[O,\widehat{P}_{BB}\right]-\widehat{P}_{BB}\boldsymbol e \boldsymbol \pi^T+\widehat{P}_{BB}P_{BA}\widetilde{X}_A+{\boldsymbol e}{\boldsymbol\beta}^T
   = \widetilde{X}_B+{\boldsymbol e}{\boldsymbol \beta}^T.
\end{equation*}

Finally, we obtain $\beta(j)=-\widetilde{X}(\alpha,j)$ for any $j\in E$  by Lemma \ref{expected-additive-type-functional}.
This completes the proof of Theorem \ref{result-expression-add-functional}.
\end{proof}

\begin{remark}
(i) Combining Theorem \ref{the-result-general-MC} and  (\ref{N_A}) and (\ref{N_B}), we have
\begin{equation}\label{probabilistic-1}
\widetilde{X}_A=\left(I-P^{(A)}\right)^{\#}N_A,
\end{equation}
\begin{equation}\label{probabilistic-2}
\widetilde{X}_B=\widehat{P}_{BB}P_{BA}\left(I-P^{(A)}\right)^{\#}N_A+N_B.
\end{equation}
From (\ref{probabilistic-1})--(\ref{probabilistic-2}), we find that the matrix $\widetilde{X}$  consists of the group inverse  $\left(I-P^{(A)}\right)^{\#}$ and the matrix $N$.
According to (\ref{finite-state}), we know that $\left(I-P^{(A)}\right)^{\#}$ is the solution of Poisson's equation for the censored Markov chain $\mathbf{\Phi}^{(A)}$.
Thus, we connect the solution $\widetilde{X}$ of Poisson's equation for $\mathbf\Phi$
and the solution $\left(I-P^{(A)}\right)^{\#}$ of Poisson's equation for $\mathbf{\Phi}^{(A)}$ through the matrix $N$.

(ii) In addition, if the set $A$ is an atom, i.e. $P(i,k)=P(j,k)$ for any $i,j\in A$ and $k\in E$,
the matrix $N=K$ is a solution of Poisson's equation (\ref{poi-fun-1}) and satisfies $N_A=O$.
For this case, we have $\widetilde{X}=K=N$.
\end{remark}

\section{Markov chains  of GI/G/1-type}
\label{sec:GIG1}
In this section, we apply our results to Markov chains  of $GI/G/1$-type with the transition matrix given by (\ref{P-of-GIG1}).
For simplicity, we write
\[L_{\leq i}=\bigcup_{k=0}^{i} \ell (k),\]
and $L_{\geq i}$ for the complement of $L_{\leq (i-1)}$. For the block-structured matrix $M$, we write
$M_{\ell_i\ell_j}=(M_{\ell_i\ell_j}(k,l))_{k\in\ell_i,l\in\ell_j}$ as $M_{ij}$ for convenience.

We now introduce $R$-measures $R_{i,j}$ and $G$-measures $G_{i,j}$, which are helpful to studying  Markov chains of $GI/G/1$-type, see e.g., \cite{Z2000,zhao2003}.
For $0\leq i < j$, $R_{i,j}$ is defined as a matrix of size $m\times m$
whose $(k, l)^{th}$ entry is the expected number of visits to state $(j, l)$ before
hitting any state in $L_{\leq (j-1)}$, given that the process starts in state $(i, k)$, i.e
\[R_{i,j}(k,l)  :=  \mathbb{E}_{(i,k)}\bigg[\sum_{t=0}^{\tau_{L_{\leq j-1}}} \mathbb{I}(\Phi_t=(j,l))\bigg].\]
For $i > j\geq 0$, $G_{i,j}$ is defined as a matrix of size $m\times m$ whose $(k, l)^{th}$ entry
is the probability of hitting state $(j, l)$ when the process enters $L_{\leq (i-1)}$ for
the first time, given that the process starts in state $(i, k)$, i.e.
\[G_{i,j}(k,l) := \mathbb{P}_{(i,k)}\left[\tau_{L_{\leq (i-1)}}< \infty, \Phi_{\tau_{L_{\leq (i-1)}}}=(j,l)\right].\]
Due to the property of repeating rows, we can write simply $R_{i,n}=R_{n-i}$ and $G_{n,i}=G_{n-i}$ for $i\geq 1$.

Let $P^{(n)}=P^{(L_{\leq n})}$ and $\boldsymbol\pi^{(n)}$ be the transition matrix and the invariant probability vector of the censored Markov chain with censoring set $L_{\leq n}$ for $n\geq 0$, respectively.
Then, we know from \cite{Z2000} that
\begin{equation*}
  P^{(n)}_{i,j}= P^{(n+1)}_{i+1,j+1}=\cdots, \ \ \textrm{for all} \ i,j=1,2,\cdots,n.
\end{equation*}
Thus, for any $i\geq 0$, we can define
\begin{equation}\label{Phi-ij}
  \Psi_i = P^{(n)}_{n-i,n} \ \ \mbox{and} \ \  \Psi_{-i} = P^{(n)}_{n,n-i}, \ \ \textrm{for}\ n>i.
\end{equation}
Furthermore, we have
\[
 R_i=\Psi_i(I-\Psi_0)^{-1},\ \ G_i=(I-\Psi_0)^{-1} \Psi_{-i},\ \ i\geq 1.
\]

In fact, the matrices $R_i$, $G_i$, $R_{0,i}$ and $G_{i,0}$ can be used to represent the matrix $\Psi_0$. From Theorem 10 and Theorem 12 in \cite{Z2000} , we have the following lemma.
\begin{lemma}\label{R-G-PHI}
Let $\mathbf{\Phi}$ be an irreducible and positive recurrent
 Markov chain of  $GI/G/1$-type. Then, we have
\[R_i(I-\Psi_0)=A_i+\sum_{k=1}^{\infty}R_{i+k}(I-\Psi_0)G_k, \ \ i\geq1,\]
\[(I-\Psi_0)G_i=A_{-i}+\sum_{k=1}^{\infty}R_{k}(I-\Psi_0)G_{i+k}, \ \ i\geq1,\]
\[\Psi_0=A_0+\sum_{k=1}^{\infty}R_k(I-\Psi_0)G_k,\]
and
\[R_{0,i}(I-\Psi_0)=B_i+\sum_{k=1}^{\infty}R_{0,i+k}(I-\Psi_0)G_k, \ \ i\geq1,\]
\[(I-\Psi_0)G_{i,0}=B_{-i}+\sum_{k=1}^{\infty}R_{k}(I-\Psi_0)G_{i+k,0}, \ \ j\geq1,\]
\[\Psi_0=B_0+\sum_{k=1}^{\infty}R_{0,k}(I-\Psi_0)G_{k,0}.\]
\end{lemma}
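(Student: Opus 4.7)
The plan is to prove each identity by a first-passage decomposition in the censored Markov chain on $L_{\leq n}$ for an arbitrary but sufficiently large level $n$, exploiting the repeating-row structure that makes all objects in the lemma independent of $n$. The key preparatory step is to rewrite everything in $\Psi$-form: using the relations $R_i(I-\Psi_0)=\Psi_i$ and $(I-\Psi_0)G_i=\Psi_{-i}$ recalled immediately before the statement of the lemma, the three repeating-row equations are equivalent to
\begin{equation*}
\Psi_i = A_i + \sum_{k=1}^{\infty}\Psi_{i+k}G_k,\qquad \Psi_{-i}=A_{-i}+\sum_{k=1}^{\infty}R_k\Psi_{-(i+k)},\qquad \Psi_0 = A_0+\sum_{k=1}^{\infty}\Psi_k G_k.
\end{equation*}

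For the first of these I would decompose a one-step transition of the censored chain from $(n-i,k)$ to $(n,l)$ by conditioning on the first step of the original chain. That step either lands directly at level $n$ (contributing $A_i$), or it lands at some level $n+j$ with $j\geq 1$ (contributing $A_{i+j}$), in which case the chain must subsequently execute an excursion that stays strictly above level $n$ and first re-enters $L_{\leq n}$ at level $n$. By the repeating-row property and the censoring interpretation in Lemma~\ref{cen-p} and Remark~\ref{P_BB}, the excursion factors into an upward expected-visits component governed by the $R$-measures and a terminating first-descent component governed by a single $G$-measure; after reindexing by the last level $n+k$ visited before the terminating descent, this excursion contributes exactly $R_{i+k}(I-\Psi_0)G_k$. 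Summing over $k$ and using $\Psi_{i+k}=R_{i+k}(I-\Psi_0)$ yields the claimed equation. The $\Psi_0$ identity is the $i=0$ specialisation of the same argument, and the $\Psi_{-i}$ identity follows from the mirror decomposition based on the last step of the censored transition (the actual step that lands in $L_{\leq n}$), which produces the $R_k$ factor on the left instead of the $G_k$ factor on the right.

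For the boundary identities involving $B_i$, $R_{0,i}$ and $G_{i,0}$, the decomposition is structurally identical but now either the start state (for the $B_i$ and $R_{0,i}$ equations) or the terminal state (for the $B_{-i}$ and $G_{i,0}$ equation) lies in the boundary level $\ell(0)$, so the first (or last) one-step transition uses a $B$-block rather than an $A$-block; all excursions above level $n$ still take place in the repeating portion of the chain and are therefore still described by the translation-invariant $R$- and $G$-measures.

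The main obstacle will be the careful bookkeeping of the excursion in $L_{\geq n+1}$. Specifically, one must verify, using the translation invariance of the repeating rows together with a Fubini exchange of the infinite sums, that aggregating over all paths that remain in $L_{\geq n+1}$ and then descend to level $n$ in a single final jump factors cleanly as $\sum_{k\geq 1} R_{i+k}(I-\Psi_0)G_k$, with each visit to each intermediate level counted exactly once. Once this factorization is established, substitution of the identities $R_i(I-\Psi_0)=\Psi_i$ and $(I-\Psi_0)G_i=\Psi_{-i}$ restores the stated $R$-$G$ form and the remaining cases follow mutatis mutandis.
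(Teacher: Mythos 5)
The paper offers no proof of this lemma at all: it is imported verbatim from Theorems 10 and 12 of Zhao \cite{Z2000}. Your plan --- pass to the $\Psi$-form via $R_i(I-\Psi_0)=\Psi_i$ and $(I-\Psi_0)G_i=\Psi_{-i}$, then establish the $\Psi$-identities by a censoring/first-passage decomposition and translation invariance --- is the standard route and can be made rigorous. However, the decomposition you actually describe is mis-specified at precisely the point you yourself flag as the main obstacle, so as written the argument has a genuine gap. First, you peel off the first step (the factor $A_{i+j}$) and then assert that the remaining excursion ``contributes exactly $R_{i+k}(I-\Psi_0)G_k$''; but $R_{i+k}(I-\Psi_0)=\Psi_{i+k}$ is the \emph{full} censored one-step block from level $n-i$ to level $n+k$, which already contains that first step, so you cannot have both factors. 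Second, indexing the excursion by ``the last level $n+k$ visited before the terminating descent'' is the wrong partition: with that index the initial segment of the path is not a censored transition ending at the \emph{first} entrance to level $n+k$, and the $G_k$ factor overlaps with other terms, so the sum over $k$ both over- and under-counts. The partition that works is by the \emph{minimal} level $n+k$ visited strictly above $n$ during the excursion: the segment up to the first entrance into $L_{\leq n+k}$ (by minimality this entrance is at level $n+k$) has block $\Psi_{i+k}=R_{i+k}(I-\Psi_0)$, and the remaining segment stays in $L_{\geq n+k}$ until its first entrance into $L_{\leq n+k-1}$, required to be at level $n$, giving $G_k$; exactly one $k$ occurs per path. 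Dually, the second identity comes from the minimal level together with the \emph{last visit} to it (a last-exit/Green's-function decomposition), which is what produces the expected-visits matrix $R_k$ on the left; decomposing ``on the last step that lands in $L_{\leq n}$'', as you propose, yields $A$-blocks and a taboo Green's function, not $R_k\Psi_{-(i+k)}$. Since you defer the bookkeeping and the bookkeeping you propose would not verify, the crux is missing, though it is repairable exactly along these lines.

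Two further points your ``mutatis mutandis'' treatment of the boundary relations conceals. For those identities you also need the boundary analogues $R_{0,n}(I-\Psi_0)=P^{(n)}_{0,n}$ and $(I-\Psi_0)G_{n,0}=P^{(n)}_{n,0}$, which are not among the relations recalled before the lemma and must be supplied from the censoring theory in \cite{Z2000}. Moreover, carried out correctly, your decomposition for the last display yields $B_0+\sum_{k\geq1}R_{0,k}(I-\Psi_0)G_{k,0}=P^{(0)}$, the (stochastic) censored block at level $0$, not the substochastic repeating block $\Psi_0=P^{(n)}_{n,n}$ --- compare Example \ref{example-1}, where $P^{(0)}$ has unit row sums but $\Psi_0$ does not --- so a careful version of your argument would actually expose the left-hand side of the sixth identity as printed as a notational slip rather than reproduce it.
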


If the Markov chain of $GI/G/1$-type  $\mathbf{\Phi}$ is irreducible and positive recurrent,
then the invariant probability vector $\boldsymbol\pi$  can be expressed in terms of $R$-measures, see \cite{GH90}:
\begin{equation}\label{rec-pi}
  \boldsymbol{\pi}_n^{T}=\boldsymbol{\pi}_0^{T}R_{0,n}+\sum_{k=1}^{n-1}\boldsymbol{\pi}_k^{T}R_{n-k}, \ \ n\geq1,
\end{equation}
where we denote ${\boldsymbol\pi}_{\ell_n}$ by ${\boldsymbol\pi}_{n}$ for simplicity.

The matrix $H$, which is obtained by deleting the first block row and the first block column of $P$ for Markov chains of GI/G/1-type,  is given by
\begin{eqnarray}\label{H}
H=\left[
\begin{array}{ccccc}
 A_0 & A_1 & A_2 & A_3 &\cdots \\
 A_{-1} & A_0 & A_1& A_2  & \cdots \\
 A_{-2} & A_{-1} & A_0& A_1  & \cdots\\
 A_{-3} & A_{-2} & A_{-1}& A_0  & \cdots\\
  \vdots & \vdots & \vdots & \vdots &\ddots
\end{array}
\right].
\end{eqnarray}

From Theorem 9 in \cite{zhao2003}, we have the following Lemma.
\begin{lemma}\label{fun-H}
For the matrix $H$ defined by (\ref{H}),
the matrix $\widehat{H}$ is recursively given by
\begin{eqnarray*}
\widehat{H}_{ij} {\rm{ = }}\left\{ {\begin{array}{cc}
{\sum \limits_{n=1}^{i-1} G_{i-n} \widehat{H}_{nj},} & i>j,\\
{\widehat{H}_{11}+\sum \limits_{n=1}^{i-1} G_{i-n} \widehat{H}_{nj}=\widehat{H}_{11}+\sum \limits_{n=1}^{j-1}\widehat{H}_{in} R_{j-n}, } & i=j, \\
{\sum \limits_{n=1}^{j-1}\widehat{H}_{in} R_{j-n}, } & i<j,
\end{array}} \right.
\end{eqnarray*}
where $\widehat{H}_{11}=(I-\Psi_0)^{-1}$.
\end{lemma}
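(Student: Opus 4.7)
The plan is to prove the recursion probabilistically by identifying $\widehat{H}_{ij}(k,l)$ as the expected number of visits to state $(j,l)$ starting from $(i,k)$ before the chain first enters level $0$. This identification follows directly from Remark \ref{P_BB}(iii) applied with $A = L_0$ and $B = L_{\geq 1}$, after noting that deleting the first block row and column of $P$ yields $H = P_{BB}$, so that $\widehat{H} = \widehat{P}_{BB}$. Given this interpretation, the three block cases will be obtained from first-entry and last-exit decompositions together with the strong Markov property and the level-shift invariance of the repeating part of $P$.

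For $i > j$, I would condition on the first entry of the chain into $L_{\leq i-1}$. Because the chain starts at level $i$ and can only reach level $j < i$ after entering $L_{\leq i-1}$, the count $\widehat{H}_{ij}$ receives no contribution before this entry. By level-shift invariance, the probability of first entering $L_{\leq i-1}$ at state $(n,l')$ equals $G_{i-n}(k,l')$ for each $n \in \{1,\ldots,i-1\}$; the remaining mass (first-entry at $L_0$) contributes nothing since the chain is then absorbed. The strong Markov property at this first-entry time gives $\widehat{H}_{ij} = \sum_{n=1}^{i-1} G_{i-n}\widehat{H}_{nj}$. For $i < j$, I would use a last-exit decomposition: every visit to level $j$ in the taboo chain is preceded by a unique last visit to $L_{\leq j-1}$, necessarily at some level $n \in \{1,\ldots,j-1\}$. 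Conditional on being at $(n,l')$ at this last-exit time, the expected number of visits to $(j,l)$ during the subsequent excursion above $L_{\leq j-1}$ equals $R_{j-n}(l',l)$; crucially, this excursion is confined to $L_{\geq j}\subseteq L_{\geq 1}$ and thus automatically avoids $L_0$, so the $R$-measure of the original chain $P$ applies without adjustment. Summing over $n$ and $l'$, and using that the expected number of visits to $(n,l')$ in the taboo chain is $\widehat{H}_{in}(k,l')$, yields $\widehat{H}_{ij} = \sum_{n=1}^{j-1}\widehat{H}_{in}R_{j-n}$.

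For $i = j$, I would split the visits to level $i$ into those occurring during the initial sojourn above $L_{\leq i-1}$ (before the chain first enters $L_{\leq i-1}$) and the remaining visits. The initial contribution equals $(I-\Psi_0)^{-1}$ by the definition of $\Psi_0$ as the diagonal block of the censored chain on $L_{\leq n}$, which by level-shift invariance equals $\widehat{H}_{11}$. Applying the first-entry decomposition to the remaining visits produces the first equality, while applying the last-exit decomposition gives the second equality. The base case $\widehat{H}_{11} = (I-\Psi_0)^{-1}$ is itself obtained from the censored-chain interpretation at $n=1$: in the censored chain on $L_{\leq 1}$ each step from level $1$ either moves within level $1$ (with kernel $\Psi_0$) or enters $L_0$, so the expected number of level-$1$ visits before absorption is $\sum_{k\geq 0}\Psi_0^k = (I-\Psi_0)^{-1}$, and censored visits to level $1$ are in one-to-one correspondence with original visits.

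The main technical subtlety is the identification of $G_{i-n}$ and $R_{j-n}$, which are defined for the unrestricted chain $P$, with the corresponding probabilities and expected counts in the taboo chain $H$. This identification rests on the observation that the excursions appearing in the decompositions stay within $L_{\geq 1}$ (upward excursions above $L_{\leq j-1}$ lie in $L_{\geq j}$, and downward excursions before first entry to $L_{\leq i-1}$ lie in $L_{\geq i}$), so the taboo on $L_0$ has no effect on these matrices. Once this is verified, the three recursions follow by routine applications of the strong Markov property.
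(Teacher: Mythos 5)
Your proof is correct. For comparison: the paper does not prove this lemma internally at all — it is quoted from Theorem 9 of \cite{zhao2003} — so there is no in-paper argument to match; your derivation (identify $\widehat{H}$ with $\widehat{P}_{BB}$ for the censoring set $A=\ell(0)$ via Remark~\ref{P_BB}(iii), then use a first-entrance decomposition into $L_{\leq i-1}$ for the $G$-terms, a last-exit decomposition from $L_{\leq j-1}$ for the $R$-terms, the kernel $\Psi_0$ for the diagonal term, and level-shift invariance together with the observation that the relevant excursions are confined to $L_{\geq i}$ or $L_{\geq j}$, so the taboo on level $0$ is immaterial and the unrestricted $R$- and $G$-measures apply) is essentially the standard censoring argument behind the cited result. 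The only point to phrase more carefully in a full write-up is the last-exit step: one should not literally condition on the last-exit time (it is not a stopping time) but instead sum over all visits to $(n,l')$ in the taboo chain and count subsequent visits to level $j$ occurring before the next return to $L_{\leq j-1}$, which yields $\sum_{n,l'}\widehat{H}_{in}(k,l')R_{j-n}(l',l)$; your closing sentences indicate you intend exactly this, so it is a matter of wording rather than a gap.
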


\begin{theorem}\label{X-GIG1}
Let $\mathbf{\Phi}$ be an irreducible and positive recurrent Markov chain of $GI/G/1$-type and
let $A=\ell(0)$. Then, we have
\begin{equation*}
 \widetilde{X}_{ij}=\left\{\aligned & \left(I-P^{(0)}\right)^{\#}\left(I-\left(I+\sum_{m=1}^{\infty}\sum_{n=1}^{\infty}B_{n}\widehat{H}_{nm}\right) \boldsymbol{e\pi}_0^{T}\right), & i=0,j=0,\\
  &\widetilde{X}_{00} \sum_{n=1}^{\infty}B_n\widehat{H}_{nj}, & i=0,j>1,\\
  &\left(\sum \limits_{m=1}^{\infty} {\widehat{H}_{im}B_{-m}}\right)\widetilde{X}_{00}
-\left(\sum \limits_{m=1}^{\infty} \widehat{H}_{im}\right)\boldsymbol{e\pi}_0^{T}, & i>1,j=0,\\
&\left(\sum \limits_{m=1}^{\infty} {\widehat{H}_{im}B_{-m}}\right)\widetilde{X}_{0j}+\widehat{H}_{ij}-
\left(\sum \limits_{m=1}^{\infty} \widehat{H}_{im}\right)\boldsymbol{e\pi}_j^{T}, & i>1,j>1.
 \endaligned \right.
\end{equation*}
\end{theorem}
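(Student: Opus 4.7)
The plan is to apply \thr{result-general-MC} with the finite set $A=\ell(0)$ of cardinality $m$ and $B=\bigcup_{k\geq 1}\ell(k)$. With respect to this partition, the block entries of $P$ read off directly from (\ref{P-of-GIG1}) as $P_{AA}=B_0$, $P_{AB}=[B_1,B_2,B_3,\ldots]$, $P_{BA}=[B_{-1}^{T},B_{-2}^{T},B_{-3}^{T},\ldots]^{T}$, and $P_{BB}=H$ where $H$ is given by (\ref{H}). By \lem{fun-H}, the block entries of $\widehat{P}_{BB}$ are exactly the $\widehat{H}_{ij}$ produced by the recursion. Plugging these identifications into formulas (\ref{the-matrix-K-A})--(\ref{the-matrix-K-B}) will yield each of the four cases by block-wise extraction.

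First I would compute $\widetilde{X}_{00}$. The first block column of $[I,P_{AB}\widehat{P}_{BB}]$ is $I$, and $P_{AB}\widehat{P}_{BB}\boldsymbol{e}=\sum_{n,m\geq 1}B_n\widehat{H}_{nm}\boldsymbol{e}$, so substituting into (\ref{the-matrix-K-A}) and factoring $\boldsymbol{e}$ on the right of $\boldsymbol{\pi}_0^{T}$ gives the stated expression verbatim. Next, for $\widetilde{X}_{0j}$ with $j\geq 1$, direct substitution into (\ref{the-matrix-K-A}) yields
\begin{equation*}
\widetilde{X}_{0j}=\left(I-P^{(0)}\right)^{\#}\left(\sum_{n=1}^{\infty}B_n\widehat{H}_{nj}-\left(\boldsymbol{e}+P_{AB}\widehat{P}_{BB}\boldsymbol{e}\right)\boldsymbol{\pi}_j^{T}\right).
\end{equation*}
The key observation needed to massage this into the compact form $\widetilde{X}_{00}\sum_{n\geq 1}B_n\widehat{H}_{nj}$ is the identity $\boldsymbol{\pi}_j^{T}=\boldsymbol{\pi}_0^{T}\sum_{n\geq 1}B_n\widehat{H}_{nj}$; this is just (\ref{pi-1}) applied to the present partition, since $(P_{AB}\widehat{P}_{BB})_{0,j}=\sum_{n\geq 1}B_n\widehat{H}_{nj}$. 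Substituting $\boldsymbol{\pi}_j^{T}$ with this expression lets the factor $\sum_{n\geq 1}B_n\widehat{H}_{nj}$ be pulled out to the right, leaving $(I-P^{(0)})^{\#}(I-(\boldsymbol{e}+P_{AB}\widehat{P}_{BB}\boldsymbol{e})\boldsymbol{\pi}_0^{T})=\widetilde{X}_{00}$ on the left.

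For the cases $i\geq 1$, I would read off the $(i,\cdot)$ blocks of (\ref{the-matrix-K-B}) directly. Since $A$ is a single block, $(\widehat{P}_{BB}P_{BA})_{i}=\sum_{m\geq 1}\widehat{H}_{im}B_{-m}$, and the $i$-th block of $\widehat{P}_{BB}\boldsymbol{e}$ is $\sum_{m\geq 1}\widehat{H}_{im}\boldsymbol{e}$. The $(i,0)$ block of $[O,\widehat{P}_{BB}]$ vanishes while its $(i,j)$ block for $j\geq 1$ equals $\widehat{H}_{ij}$. Combining these three contributions together with the already-derived formula for $\widetilde{X}_{0j}$ (multiplied on the left by $\sum_{m\geq 1}\widehat{H}_{im}B_{-m}$) gives the stated expressions for $\widetilde{X}_{i0}$ and $\widetilde{X}_{ij}$.

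The main obstacle, and the only step that is not bookkeeping, is the simplification for the $(0,j)$ block: recognizing that the invariant distribution identity (\ref{pi-1}) specializes in the $GI/G/1$-type setting to $\boldsymbol{\pi}_j^{T}=\boldsymbol{\pi}_0^{T}\sum_{n\geq 1}B_n\widehat{H}_{nj}$, which is what makes the right-factorization by $\sum_{n\geq 1}B_n\widehat{H}_{nj}$ possible. Everything else is a careful but routine unpacking of (\ref{the-matrix-K-A})--(\ref{the-matrix-K-B}) using \lem{fun-H} and the block structure of $P$.
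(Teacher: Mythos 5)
Your proposal is correct and follows essentially the same route as the paper's proof: the same partition with $A=\ell(0)$, the same block identifications plugged into Theorem \ref{the-result-general-MC} and Lemma \ref{fun-H}, and the same key identity $\boldsymbol{\pi}_j^{T}=\boldsymbol{\pi}_0^{T}\sum_{n\geq 1}B_n\widehat{H}_{nj}$ (obtained from (\ref{pi-1})) to factor the $(0,j)$ block into $\widetilde{X}_{00}\sum_{n\geq 1}B_n\widehat{H}_{nj}$.
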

\begin{proof} Since $A=\ell(0)$, it is easy to see that
\begin{eqnarray}
\left.
\begin{array}{cc}
P_{AA}=B_0, & P_{AB}=[B_1,B_2,B_3,\cdots], \\
\\
 P_{BA}=\left[
\begin{array}{cccc}
 B_{-1}\\
 B_{-2} \\
 B_{-3}\\
  \vdots
\end{array}
\right],   & P_{BB}=H=\left[
\begin{array}{cccc}
 A_0 & A_1 & A_2 &\cdots \\
 A_{-1} & A_0 & A_1 & \cdots \\
 A_{-2} & A_{-1} & A_0 & \cdots\\
  \vdots & \vdots & \vdots &\ddots
\end{array}
\right]. \\
\end{array}
\right.
\end{eqnarray}
Moreover, according to (\ref{partition-X}), we write
\[\widetilde{X}_A=[\widetilde{X}_{00},\widetilde{X}_{01},\widetilde{X}_{02},\cdots], \ \
\widetilde{X}_B=\left[
\begin{array}{cccc}
\widetilde{X}_{10} & \widetilde{X}_{11} & \widetilde{X}_{12} & \cdots \\
\widetilde{X}_{20} & \widetilde{X}_{21} & \widetilde{X}_{22} & \cdots \\
\widetilde{X}_{30} & \widetilde{X}_{31} & \widetilde{X}_{32} & \cdots \\
\vdots & \vdots & \vdots & \ddots
\end{array}
\right].
\]
From Lemma \ref{fun-H}, it is easy to obtain
\[
P_{AB}\widehat{P}_{BB}=\left[\sum_{n=1}^{\infty}B_{n}\widehat{H}_{n1}, \sum_{n=1}^{\infty}B_{n}\widehat{H}_{n2}, \sum_{n=1}^{\infty}B_{n}\widehat{H}_{n3}, \cdots\right].
\]
According to Theorem \ref{the-result-general-MC}, it follows that
\[\widetilde{X}_A=\left(I-P^{(0)}\right)^{\#}\left(\left[I,\sum_{n=1}^{\infty}B_{n}\widehat{H}_{n1},\sum_{n=1}^{\infty}B_{n}\widehat{H}_{n2},\cdots\right]
-\left(I+\sum_{m=1}^{\infty}\sum_{n=1}^{\infty}B_{n}\widehat{H}_{nm}\right) \boldsymbol{e\pi}^{T}\right).\]
Furthermore,  we obtain
\begin{equation}\label{GIG1_D_00}
\widetilde{X}_{00}=\left(I-P^{(0)}\right)^{\#}\left(I-\left(I+\sum_{m=1}^{\infty}\sum_{n=1}^{\infty}B_{n}\widehat{H}_{nm}\right) \boldsymbol{e\pi}_0^{T}\right)
\end{equation}
and
\begin{eqnarray}\label{GIG1_D_0k}
\widetilde{X}_{0j}&=&\left(I-P^{(0)}\right)^{\#}\left(\sum_{n=1}^{\infty}B_{n}\widehat{H}_{nj}-\left(I+\sum_{m=1}^{\infty}\sum_{n=1}^{\infty}B_{n}\widehat{H}_{nm}\right) \boldsymbol{e\pi}_j^{T}\right).
\end{eqnarray}
From (\ref{pi-1}) and Lemma \ref{fun-H}, we obtain
\begin{equation}\label{pi-j}
  \boldsymbol{\pi}_j^{T}=\boldsymbol{\pi}_0^{T}\sum_{n=1}^{\infty}B_n\widehat{H}_{nj}.
\end{equation}
Substituting (\ref{pi-j}) into (\ref{GIG1_D_0k}), we have, for $j\geq 1$,
\begin{eqnarray}\label{D_0k1}
\widetilde{X}_{0j}&=&\left(I-P^{(0)}\right)^{\#}\left(\sum_{n=1}^{\infty}B_{n}\widehat{H}_{nj}
-\left(I+\sum_{m=1}^{\infty}\sum_{n=1}^{\infty}B_{n}\widehat{H}_{nm}\right)
\boldsymbol{e\pi}_0^{T}\sum_{n=1}B_n\widehat{H}_{nj}\right)\nonumber\\
&=&\left(I-P^{(0)}\right)^{\#}\left(I
-\left(I+\sum_{m=1}^{\infty}\sum_{n=1}^{\infty}B_{n}\widehat{H}_{nm}\right)
\boldsymbol{e\pi}_0^{T}\right)\sum_{n=1}B_n\widehat{H}_{nj}\nonumber\\
&=&\widetilde{X}_{00}\sum_{n=1}^{\infty}B_n\widehat{H}_{nj}\nonumber.
\end{eqnarray}

From Theorem \ref{the-result-general-MC} again, we have
\[
\widetilde{X}_B=\widehat{H}P_{AB}\widetilde{X}_A + [O,\widehat{H}]-\widehat{H}\boldsymbol{e\pi}^{T}.
\]
Thus, for $i\geq 1$,
\[\widetilde{X}_{i0} = \left(\sum_{m=1}^{\infty}{\widehat{H}_{im}B_{-m}}\right)\widetilde{X}_{00} - \left(\sum \limits_{m=1}^{\infty} \widehat{H}_{im}\right)\boldsymbol{e\pi}_0^{T} \]
Similarly, we have
\begin{eqnarray*}
\widetilde{X}_{ij}=\left(\sum \limits_{m=1}^{\infty} {\widehat{H}_{im}B_{-m}}\right)\widetilde{X}_{0j}+\widehat{H}_{ij}-\left(\sum \limits_{m=1}^{\infty} \widehat{H}_{im}\right)\boldsymbol{e\pi}_j^{T},\ \ i,j \geq 1.
\end{eqnarray*}
This  completes the proof of Theorem \ref{X-GIG1}.
\end{proof}

\begin{remark} For Markov chains of $GI/M/1$-type, we denote $R_1$ by $R$ for simplicity. From Lemma \ref{R-G-PHI},  the matrices $R$ and $\Psi_0$ satisfy
\[
 R=\Psi_1 (I-\Psi_0)^{-1},\ \ \Psi_0=\sum \limits_{k=0}^{\infty}R^{k}A_{-k},\ \ i\geq 1,
\]
where $\Psi_1=A_{1}$. Moreover, equation  (\ref{rec-pi}) becomes
\begin{eqnarray*}\label{pi}
&&\boldsymbol{\pi}_0^{T} \boldsymbol{e}+\boldsymbol{\pi}_1^{T} (I-R)^{-1} \boldsymbol{e}=1,\\
&&\boldsymbol{\pi}_j^{T}=\boldsymbol{\pi}_1^{T} R^{j-1},\ \ j\geq 1.
\end{eqnarray*}
Thus, the matrix $\widetilde{X}_{0j}, j\geq0$ in Theorem \ref{X-GIG1} is given by
\begin{align*}
\widetilde{X}_{00} & = \left(I-P^{(0)}\right)^{\#}\left(I-\left(I+B_1(I-\Psi_0)^{-1}(I-R)^{-1}\right) \boldsymbol{e\pi}_0^{T}\right), &   \\
\widetilde{X}_{0j} & = \left(I-P^{(0)}\right)^{\#}\left(B_1(I-\Psi_0)^{-1}\left(I-(I-R)^{-1}\boldsymbol{e\pi}_1^{T}\right)- \boldsymbol{e\pi}_1^{T}\right)R^{j-1}, & & j\geq1.
\end{align*}
Please refer to \cite{LWX14} for more details about the matrix solution $\widetilde{X}$ of Markov chains of $GI/M/1$-type.
In particular, for $QBD$ processes, the matrix solution $\widetilde{X}$ is given by Theorem 4.2 in \cite{DLL13}.
\end{remark}

\begin{remark}\label{MG1}
For Markov chains of $M/G/1$-type, we denote $G_1$ by $G$ for simplicity. From Lemma \ref{R-G-PHI}, the matrices $R_{0,i}$ and $R_i$  are given by
\[
R_i=\sum_{n=i}^{\infty}A_nG^{n-i}(I-\Psi_0)^{-1}, \ \ i \geq 1,\]
\[
R_{0,i}  =\sum_{n=i}^{\infty}B_nG^{n-i}(I-\Psi_0)^{-1}, \ \ i\geq1,\]
where $\Psi_0=\sum_{n=0}^{\infty}A_nG^n$. Furthermore, from Lemma \ref{fun-H}, we have
\[\widehat{H}_{j1}=G^{j-1}(I-\Psi_0)^{-1}.\]
\end{remark}

\section{MAP/G/1 Queues with Negative Customers}
\label{sec:MAPG1}
In this section, we give numerical calculations of the matrix $\widetilde{X}$ for  $MAP/G/1$ queues with negative customers.
Queueing systems with negative arrivals have a lot of applications in various areas, such as  computer, manufacturing systems, neural and communication networks, see e.g., \cite{HP96,LZ2004}.

For a single-server FIFO queue, we suppose that there are two types of independent arrivals, positive and negative.
Positive arrivals correspond to customers who upon arrival, join the queue with the intention of being served and then leaving the system.
When a negative customer arrives at the queue, it immediately removes one or more positive  customers if present.
Here, we consider the RCA rule, i.e. arrival of a negative customer which removes all the customers in the system.
Furthermore, we assume that the arrivals of both positive and negative customers are Markovian arrival processes (MAP)
and the service times are independent of the two arrival processes of positive and negative customers and obey a general distribution.
Then the above queueing model is a $MAP/G/1$ queue with negative customers.

In \cite{LZ2004}, Li and Zhao analyzed $MAP/G/1$ queues with negative customers by  introducing  supplementary variables  and constructing the differential equations.
For a  stable RCA system, they related the boundary conditions of the system of differential equations to a  $GI/G/1$ type Markov chain, which is given by
\begin{eqnarray}\label{MAP/G/1}
&&P=\left[
\begin{array}{cccccc}
B_{0}  & B_1     & B_2    & B_3     & B_4& \cdots \\
B_{-1} & A_0     & A_1    & A_2    & A_3 &\cdots \\
B_{-2} & A_{-1}  & A_{0} & A_1    &A_2& \cdots \\
B_{-2} & 0         & A_{-1} & A_0    &A_1 & \cdots\\
B_{-2} & 0         &0          & A_{-1} & A_0 & \cdots\\
\vdots & \vdots & \vdots & \vdots  & \vdots &\ddots
\end{array}
\right].
\end{eqnarray}

It follows from (\ref{H})  that the matrix $H$ of the transition matrix $P$ defined by (\ref{MAP/G/1})  is of the $M/G/1$-type structure.
Thus, we can combine Remark \ref{MG1} and Theorem \ref{X-GIG1} to calculate the matrix solution $\widetilde{X}$ of (\ref{MAP/G/1}). From Remark \ref{MG1}, we know that
the key step is to calculate $G$.
By Proposition 3.5.1 in \cite{H14}, the matrix $G$ can  be computed recursively as follows,
\begin{equation}\label{compute-G}
G[0]=O,\ \ G[k+1]=\sum_{n=-1}^{\infty}A_n(G[k])^{n+1}, \ \ k\geq 0.
\end{equation}
It can be shown that the sequence $\{G[k], \ k\geq 0\}$ is nondecreasing and converges to $G$.
The computation of $G$ is stopped when
\begin{equation}\label{error-G}
\left\|G[k+1]-G[k]\right\|_{\infty}<\varepsilon,
\end{equation}
where $\|M\|_{\infty}=\max_{i}\sum_{j}|M(i,j)|$ denotes the $\infty$-norm of matrix $M$.

Our analysis leads to the algorithm in Algorithm \ref{algorithm} 

\begin{algorithm}\label{algorithm}
Computing the matrix solution $\widetilde{X}$ of (\ref{MAP/G/1}).

\noindent INPUT  the matrices $\{B_i, i\geq -2\}$,  $\{A_i, i\geq -1\}$ and the error $\varepsilon$.

\noindent OUTPUT the matrix solution $\widetilde{X}$.

\noindent COMPUTATIONS:

\noindent Step1: use (\ref{compute-G}) and (\ref{error-G}) to compute $G$.

\noindent Step2: use Remark \ref{MG1} to compute $\Psi_0$, $R_i$ and $R_{0,i}$.

\noindent Step3: use Lemma \ref{fun-H} to compute $\widehat{H}$.

\noindent Step4: use Lemma  \ref{cen-p} to compute $P^{(0)}$ and $\boldsymbol\pi^{(0)}$.

\noindent Step5: use (\ref{compute-(I-P)}) to compute $(I-P)^{\#}$.

\noindent Step6: use (\ref{Pi-A}), (\ref{the constant-c}) and (\ref{rec-pi})  to compute $\boldsymbol\pi$.

\noindent Step6: use Theorem \ref{X-GIG1} to compute $\widetilde{X}$.

\end{algorithm}

\begin{example}\label{example-1}
Consider a  Markov chain of $GI/G/1$-type with transition matrix $P$ given by (\ref{MAP/G/1}). Let $B_i=A_i=0$, $i\geq 3$ and
\[B_0=\left[
\begin{array}{ccc}
0.2 & 0.1 &0.2 \\
0& 0.4&0.1\\
0 &0.2 &0.1
\end{array}
\right], \ \
B_1=\left[
\begin{array}{ccc}
0.2 & 0 &0.2 \\
0.3& 0.1 &0\\
0.4 &0.2 &0
\end{array}
\right],\ \
B_2=\left[
\begin{array}{ccc}
0.1 & 0 &0 \\
0& 0.1 &0\\
0 &0 &0.1
\end{array}
\right],
\]
\[B_{-1}=\left[
\begin{array}{ccc}
0.1 & 0 &0.4 \\
0.3& 0.2 &0.1\\
0 &0.1 &0.3
\end{array}
\right], \ \
B_{-2}=\left[
\begin{array}{ccc}
0 & 0 &0.2 \\
0.1& 0.1 &0\\
0 &0&0.1
\end{array}
\right],\ \ A_{-1}=\left[
\begin{array}{ccc}
0.1 & 0 &0.2 \\
0.2& 0.1 &0.1\\
0.1 &0.1 &0.1
\end{array}
\right]
\]

\[
A_0=\left[
\begin{array}{ccc}
0 & 0.1 &0.2 \\
0.1& 0.1&0\\
0&0.1 &0.2
\end{array}
\right],\ \
A_1=\left[
\begin{array}{ccc}
0.1 & 0 &0 \\
0& 0.1 &0\\
0 &0.1 &0.1
\end{array}
\right],\ \
A_{2}=\left[
\begin{array}{ccc}
0 & 0.1 &0 \\
0& 0.1 &0\\
0 &0 &0.1
\end{array}
\right].
\]
\end{example}

It is obvious that $P$ is irreducible and aperiodic. From Theorem 4.1 in \cite{MTZ12}, we know that the chain is strong ergodic, which implies $\mathbb{E}_{\boldsymbol\pi}\left[\tau_i\right]<\infty$ for every $i\in E$.
Here, we take $\varepsilon=0.0001$.  From (\ref{compute-G})--(\ref{error-G}), we obtain the numerical result of $G$ as follows,
\[G=\left[
\begin{array}{ccc}
0.1802  & 0.0536  &0.2690 \\
0.2610& 0.1268 &0.1610\\
0.1925 &0.1591&0.1813
\end{array}
\right].\]		
From Remark \ref{MG1}, we have
\[\Psi_0=\left[
\begin{array}{ccc}
0.0291  & 0.1109  &0.2389 \\
0.1372& 0.1183 &0.0281\\
0.0565 &0.1345&0.2453
\end{array}
\right],\]		 	 	 	
\[R_1=\left[
\begin{array}{ccc}
0.1398 & 0.0422 &0.0671 \\
0.0492&0.1404&0.0421\\
0.0537 &0.1656 &0.1797
\end{array}
\right], \ \
R_2=\left[
\begin{array}{ccc}
0.0171 & 0.1171 &0.0098\\
0.0171& 0.1171 &0.0098\\
0.0111 &0.0223 &0.1368
\end{array}
\right]
\]  	 	
and
\[R_{0,1}=\left[
\begin{array}{ccc}
0.2609 & 0.0979 &0.3869 \\
0.3722&0.1970&0.1465\\
0.4904  &0.3358 &0.1918
\end{array}
\right], \ \
R_{0,2}=\left[
\begin{array}{ccc}
 0.1077 & 0.0189 &0.0348\\
0.0171& 0.1171 &0.0098\\
0.0111 &0.0223 &0.1368
\end{array}
\right].
\]   	
It follows form Lemma  \ref{cen-p} that
\[
P^{(0)}=\left[
\begin{array}{ccc}
0.2907& 0.1935& 0.5158 \\
0.1389  & 0.4967 &0.3644\\
0.1952  & 0.3318&0.4730 \\
\end{array}
\right],
\] 	 	
from which
\[{(\boldsymbol\pi^{(0)})^T}=(0.1931, 0.3653, 0.4416).\]	 	
By (\ref{rec-pi}), we have
\begin{equation*}
  \boldsymbol{\pi}_n^{T}=c(\boldsymbol{\pi}^{(0)})^{T}R_{0,n}+\sum_{k=1}^{n-1}\boldsymbol{\pi}_k^{T}R_{n-k}, \ \ n\geq1,
\end{equation*}	
where $c$ is a constant such that $c=\pi_0(0)+\pi_0(1)+\pi_0(2)$.  From (\ref{the constant-c}), we obtain that $c=0.3563$ and
the invariant probability vector which is given as follows,
\begin{table}[h]
\caption{ The invariant probability vector of Example \ref{example-1}.}
  \centering
  \begin{tabular}{cc|cc|cc|cc|cc}
    \hline
        \hline
    $state$ & $value$  & $state$ & $value$ & $state$ & $value$& $state$ & $value$& $state$ & $value$\\
\hline
    (0,1) & 0.0688 & (3,1)&0.0156&(6,1)& 0.0025 &(9,1)&0.0004&(12,1)&0.0001\\
    (0,2) &0.1302  & (3,2)&0.0459&(6,2)&0.0076&(9,2)&0.0014&(12,2)&0.0002\\
    (0,3) & 0.1574& (3,3)&0.0268  & (6,3) & 0.0049 &(9,3)&0.0009& (12,3) &	0.0002 \\
 \hline
     (1,1) &0.1436&(4,1)&0.0080&(7,1)&0.0014&(10,1)&0.0002 &(13,1)&0.0000\\
   (1,2) & 0.0852&(4,2)&0.0233&(7,2)&0.0043&(10,2)&0.0008&(13,2)&0.0001\\
    (1,3) & 0.0759&(4,3)&0.0158&(7,3)&0.0027&(10,3)&	0.0005 &(13,3)&0.0001\\
\hline
 (2,1) & 0.0397&(5,1)&0.0045&(8,1)&0.0008&(11,1)&	0.0001&(14,1)&	0.0000 \\
   (2,2) &0.0506&(5,2)&0.0140&(8,2)&0.0024&(11,2)&	0.0004&(14,2)&0.0001\\
    (2,3) & 0.0521&(5,3)&0.0086&(8,3)&0.0015&(11,3)&   	0.0003&(14,3)&0.0000\\
    \hline
        \hline
  \end{tabular}
\end{table}	 	

From Theorem \ref{X-GIG1}, we obtain the matrix solution $\widetilde{X}$ 	as follows, 	 	 	 	 	 	 	  	 	 	 	 	 	 	
\begin{equation*}
\widetilde{X}=
\left[\begin{array}{ccc|ccc|ccc|c}
0.931  & -0.587 & -0.333 & -0.139 &-0.136&0.210 & 0.072 &-0.049& 0.004& \cdots \\
-0.237 & 0.854  & -0.480 & 0.021  &-0.016&-0.059 &-0.017 & 0.074 &-0.075&\cdots \\
-0.211 & -0.449 & 0.542  & 0.044  &0.073& -0.044 &-0.017 &-0.040&0.061& \cdots \\
\hline
-0.160  & -0.684& -0.227 & 0.669& -0.038  & 0.142& 0.068 &	-0.042&0.026& \cdots \\
0.100 &-0.431  &-0.516 &-0.216  &0.922 & 	-0.026  & 0.011&0.056 &-0.052& \cdots \\
-0.283 & -0.585 &-0.376 &-0.365 &-0.046 & 1.101 &-0.039 &0.078&0.104& \cdots \\
\hline
-0.318 & -0.812  & -0.487 & -0.391 & -0.198 & 0.100 & 0.964&0.049&0.246& \cdots \\
-0.142 & -0.642 &	-0.649 &	-0.274 	&-0.125 &	0.050 	&0.096 &	1.057 	&-0.017& \cdots \\
-0.296& 	-0.851& 	-0.648& 	-0.457 	&-0.151 	&-0.033 &	-0.016&	0.057 &	1.218& \cdots \\
\hline
\vdots & \vdots & \vdots & \vdots & \vdots & \vdots & \vdots &\vdots&\vdots& \ddots
\end{array}\right].
\end{equation*}

Now, we take $g(i,j)=i\times j$ in this model. By our calculations, we have $\boldsymbol\pi^T|\boldsymbol{g}|=2.9356<\infty$.
Thus, the solutions $\boldsymbol{f_D}$ and ${\boldsymbol{f_K}}$ exist simultaneously.
Taking $\alpha=(0,1)$, from Corollary \ref{expression-D} and Theorem \ref{result-expression-add-functional}, we obtain the values of $\boldsymbol{f_D}$ and ${\boldsymbol{f_K}}$,
and the values of those solutions are plotted in Figure \ref{Fig1}. From Remark \ref{f_d,f_k}, we know that $\boldsymbol{f_D}(i)-{\boldsymbol{f_K}}(i)= -10.3247$ for every $i\in E$.
Note that the $x$-axis represents the state space, in which the origin is the state $(0, 1)$,
and $n$ represents state $(i, j)$ such that $n = 3i + j-1$, $0\leq j\leq2$.
\begin{figure}[h]
  \centering
  \includegraphics[width=8cm]{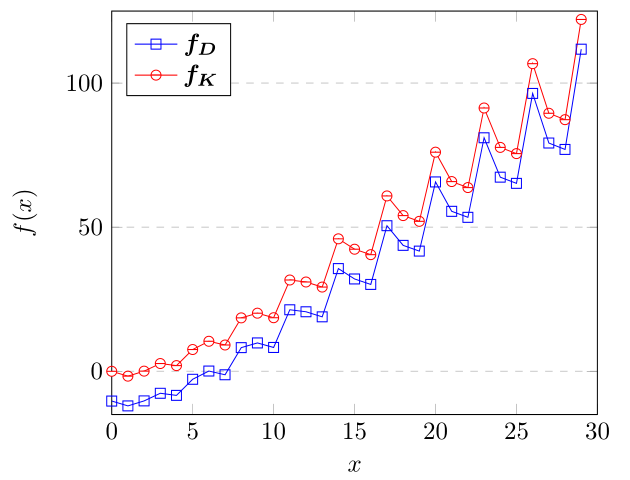}
  \caption{The values of the solutions $\boldsymbol{f_D}$ and $\boldsymbol{f_K}$ with $\alpha=(0,1)$ of Example \ref{example-1}. }\label{Fig1}
\end{figure}

For a scalar-valued  Markov chain of GI/G/1-type, we can present the  analytic expression of the matrix solution $\widetilde{X}$.
\begin{example}\label{example-2-0}
Consider a DTMC with the following stochastic transition matrix:
\begin{equation}\label{example-2}
 P=\left[
 \begin{array}{ccccccc}
   b_{0}   & b_{1} & b_{2}     & b_{3}  & b_{4}  & \cdots\\
   b_{-1}   & a_{1}    & a_{2} & a_{3}  & a_{4}  & \cdots\\
   b_{-2}   &  a_{0}     &a_{1}      & a_{2} & a_{3} & \cdots\\
   b_{-2}   & 0     & a_{0}      &a_{1}  & a_{2}   & \cdots\\
   b_{-2}   & 0    & 0  & a_{0}      &a_{1}   & \cdots\\
   \vdots&\vdots &\vdots& \vdots& \vdots & \ddots \\
   \end{array}\right],
\end{equation}
where $0<b_{-1}, b_{-2}<1$.
\end{example}

Clearly P is irreducible and aperiodic. From Theorem 16.0.2 in \cite{MT09}, we know that the chain is strong ergodic, which implies $\mathbb{E}_{\boldsymbol\pi}\left[\tau_i\right]<\infty$ for every $i\in E$.
Now, let $\mathcal{G}$ be the minimal nonnegative solution of equation  $\mathcal{G}=\sum_{i=0}^{\infty}a_i\mathcal{G}^i$. Moreover, we let
\[\psi_0=\sum_{i=0}^{\infty}a_{i+1}\mathcal{G}^i\]
\[
r_n=(1-\psi_0)^{-1}\sum_{i=1}^{\infty}a_{i+n}\mathcal{G}^{i-1}, \ \ n\geq 1,\]
and
\begin{equation*}
\mathcal{H}=\left[
 \begin{array}{ccccc}
  a_{1}    & a_{2} & a_{3}  & a_{4}  & \cdots\\
    a_{0}     &a_{1}      & a_{2} & a_{3} & \cdots\\
   0     & a_{0}      &a_{1}  & a_{2}   & \cdots\\
  0    & 0  & a_{0}      &a_{1}   & \cdots\\
\vdots &\vdots& \vdots& \vdots & \ddots \\
   \end{array}\right].
\end{equation*}

In (\ref{example-2}), let $a_i=2^{-i-2}$, $i\geq 0$; $b_i=2^{-i-1}$, $\geq0$; $b_{-1}=\frac{3}{4}$ and $b_{-2}=\frac{1}{2}$.
Here we take $A=\{0,1\}$. Then, we have
\begin{eqnarray}
\left.
\begin{array}{cc}
P_{AA}=\left[
\begin{array}{cccc}
1/2& {1}/{4}\\
{3}/{4} & {1}/{8}
\end{array}
\right], & P_{AB}=\left[
\begin{array}{cccc}
{1}/{8}&{1}/{16}&{1}/{32}&\cdots\\
{1}/{16}&{1}/{32}&{1}/{64}&\cdots
\end{array}
\right], \\
\\
 P_{BA}=\left[
\begin{array}{cccc}
{1}/{2}&{1}/{4}\\
{1}/{2}&0\\
{1}/{2}&0\\
  \vdots&\vdots
\end{array}
\right],   & P_{BB}=\mathcal{H}=\left[
\begin{array}{cccc}
 1/8 & 1/16 &1/32 &\cdots \\
 1/4 &1/8 &1/16& \cdots \\
0& 1/4 & 1/8& \cdots\\
  \vdots & \vdots & \vdots &\ddots
\end{array}
\right]. \\
\end{array}
\right.
\end{eqnarray}
By calculations, we obtain
\[\mathcal{G}=\frac{2-\sqrt{2}}{2},\ \ \psi_0=\frac{2-\sqrt{2}}{4},\ \ r_n=\frac{(2-\sqrt{2})^2}{2^{n+1}}.\]
From Lemma \ref{fun-H}, we have
\[\widehat{\mathcal{H}}(1,1)=2(2-\sqrt{2}); \ \  \widehat{\mathcal{H}}(i,i)=\widehat{\mathcal{H}}(1,1)+\mathcal{G}\widehat{\mathcal{H}}(i-1,i),  \ i\geq 2;
\ \  \widehat{\mathcal{H}}(i,j)=\mathcal{G}^{i-j}\widehat{\mathcal{H}}(j,j), \ i> j;\]
\[\widehat{\mathcal{H}}(i,i+1)=r_1\sum_{n=1}^{i}\left(\frac{1}{2}\right)^{i-n}\widehat{\mathcal{H}}(i,n);\ \  \widehat{\mathcal{H}}(i,j)=(2-\sqrt{2})^{j-i-1}\widehat{\mathcal{H}}(i,i+1),\ j-i>1. \]
It follows form Lemma  \ref{cen-p} that
\[
P^{(A)}=\left[
\begin{array}{cc}
0.7071& 0.2929  \\
0.8536 & 0.1464
\end{array}
\right],
\]
from which, we have
\[({\boldsymbol{\pi}^{(A)}})^T=(0.7445, 0.2555).\]
By (\ref{Pi-A}), (\ref{the constant-c}) and (\ref{pi-1}), we obtain the invariant probability vector which is given as follows,
\begin{table}[h]
\caption{The invariant probability vector of Example \ref{example-2-0}.}
  \centering
  \begin{tabular}{cccccccccc}
    \hline    \hline
    $state$ & 0 & 1&2&3&4&5&6&7&8\\
    \hline
$value$    & 0.5469 & 0.1877& 0.1099 & 0.0644 & 0.0377 & 0.0221 & 0.0129 & 0.0076 & 0.0044\\
 \hline
    $state$   &  9& 10&11&12&13&14&15&16&17\\
    \hline
$value$      & 0.0026   &0.0015 &0.0009&0.0005 & 0.0003 & 0.0002 & 0.0001 & 0.0001 & 0.0000  \\
    \hline    \hline
  \end{tabular}
\end{table}

It follows form Theorem \ref{the-result-general-MC} that
\begin{equation*}
\widetilde{X}=\left[
 \begin{array}{cccccccccc}
0.198     & -0.231 & 0.014  & 0.008  &0.005&0.003 &	0.002 &0.001&0.001& \cdots\\
-0.576    & 0.675 & -0.041  &-0.024  &-0.014&-0.008 &-0.005&0.003&-0.002& \cdots\\
-0.802    &-0.231 & 1.014  & 0.008  &0.005&0.003  &	0.002 &0.001&0.001 & \cdots\\
-0.869  &	-0.497 	 &0.151  &	1.089 	 &0.052  &	0.030 & 	0.018  &	0.010 & 	0.006 & \cdots\\
-0.888  &	-0.575 & 	-0.101 & 	0.234 	 &1.137  &	0.080 & 	0.047  &	0.028  &	0.016 	& \cdots\\
-0.894  &	-0.597 	 &-0.175  &	-0.017 & 	0.283 & 	1.166 & 	0.097  &	0.057 & 	0.033 	& \cdots\\
-0.896 & 	-0.604  &	-0.197 	 &-0.090 & 	0.033 & 	0.312 	 &1.183 & 	0.107  &	0.063 	& \cdots\\
-0.896  &	-0.606 & 	-0.203  &	-0.112 & 	-0.040 & 	0.062 & 	0.329  &	1.193 	 &0.113 	& \cdots\\
-0.896 & 	-0.607 & 	-0.205  &	-0.118  &	-0.062 & 	-0.011  &	0.079 & 	0.339 & 	1.199 	& \cdots\\
\vdots &\vdots& \vdots& \vdots &\vdots &\vdots &\vdots& \vdots& \vdots & \ddots \\
   \end{array}\right].
\end{equation*}

Letting $g(i)=\sqrt{i}$, we get $\boldsymbol{\pi}^T|\boldsymbol{g}|=0.6641<\infty$ and the solutions $\boldsymbol{f_D}$, $\boldsymbol{f_K}$ exist simultaneously.
Taking 	 $\alpha=0$,  from Corollary \ref{expression-D} and Theorem \ref{result-expression-add-functional},
we obtain the solutions $\boldsymbol{f_D}$ and $\boldsymbol{f_K}$  and the values of those solutions are plotted in Figure \ref{Fig2}.
From Remark \ref{f_d,f_k}, we know that $\boldsymbol{f_D}(i)-{\boldsymbol{f_K}}(i)= -0.7079$ for every $i\in E$.

\begin{figure}[h]
  \centering
  \includegraphics[width=8cm]{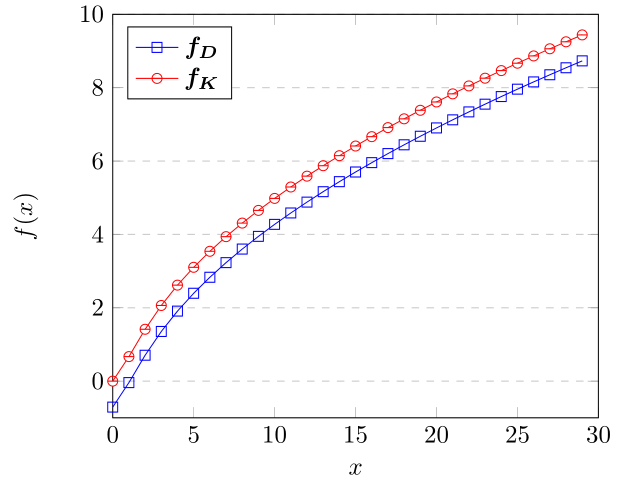}
  \caption{The values of the solutions $\boldsymbol{f_D}$ and $\boldsymbol{f_K}$ with $\alpha=0$ for Example \ref{example-2-0}. }\label{Fig2}
\end{figure}

\section{Continuous-time Markov chains}
\label{sec:CTMC}

It is of the same feasibility to investigate continuous-time Markov chains (CTMCs) by using matrix-analytic methods.
Let $Q = (Q(i, j))_{i,j\in E}$ be a totally stable and regular generator of the CTMC $\mathbf{\Phi}=\{\Phi_t, t\geq 0\}$ on a countable state space $E$.
It is assumed that $\mathbf{\Phi}$  is irreducible and positive recurrent with the unique invariant probability vector ${\boldsymbol\pi}$.
For a given $Q$, Poisson's equation  is written as
\begin{equation}\label{poi-fun-2}
-QX=I-\boldsymbol{e\pi}^T.
\end{equation}

For a non-empty subset $A$ of $E$, let $\mathbf{\Phi}^{(A)}=\{\Phi^{(A)}_t, t\geq0\}$ be the censored Markov chain on  $A$ with the generator $Q^{(A)}$.
According to section 5 in \cite{GV99}, the generator $Q^{(A)}$  is given by
\begin{equation}\label{gen-tru-2}
Q^{(A)}=Q_{AA}+Q_{AB}\widehat{Q}_{BB}Q_{BA},
\end{equation}
where $B$ is the complement of set $A$ , and $\widehat{Q}_{BB}$, defined by
\[\widehat{Q}_{BB}:=\int_{0}^{\infty}\exp(Q_{BB}t)dt,\]
is the minimal nonnegative solution of
\[X(-Q_{BB})=(-Q_{BB})X=I.\]

Using the similar arguments in the proof of Theorem \ref{the-result-general-MC} leads the following results. The proof will be omitted.

\begin{theorem}\label{the-result-CTMC}
Let $\mathbf{\Phi}$ be an irreducible and positive recurrent CTMC and let $A$ be a finite subset of $E$.
Then, the matrix $\widetilde{X}$, given by
\begin{equation}\label{the-matrix-K-A-2}
\widetilde{X}_{A}=\left(-{Q}^{(A)}\right)^{\#}\left(\left[I, Q_{AB}\widehat{Q}_{BB}\right]-\left(\boldsymbol e+Q_{AB}\widehat{Q}_{BB}\boldsymbol e\right)\boldsymbol\pi^T\right)
\end{equation}
and
\begin{equation}\label{the-matrix-K-B-2}
\widetilde{X}_{B}=\widehat{Q}_{BB}Q_{BA}\widetilde{X}_A+\left[O,\widehat{Q}_{BB}\right]-\widehat{Q}_{BB}\boldsymbol e \boldsymbol \pi^T,
\end{equation}
is one solution of  Poisson's equation (\ref{poi-fun-2}).
\end{theorem}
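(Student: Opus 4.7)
The plan is to mimic the proof of Theorem \ref{the-result-general-MC} step-by-step, with the formal substitutions $I-P \leftrightarrow -Q$ and $\widehat{P}_{BB} \leftrightarrow \widehat{Q}_{BB}$. I begin by partitioning $Q$ according to $A$ and $B := A^C$, and partitioning $X$ as in (\ref{partition-X}). Poisson's equation (\ref{poi-fun-2}) then splits into the two block equations
\begin{equation*}
-Q_{AA}X_A - Q_{AB}X_B = [I,O] - \boldsymbol{e\pi}^T, \qquad -Q_{BA}X_A - Q_{BB}X_B = [O,I] - \boldsymbol{e\pi}^T.
\end{equation*}
Since $A$ is finite and $\mathbf{\Phi}$ is irreducible and positive recurrent, $\widehat{Q}_{BB}$ is the well-defined Green kernel satisfying $\widehat{Q}_{BB}(-Q_{BB}) = (-Q_{BB})\widehat{Q}_{BB} = I$.

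Premultiplying the second block equation by $\widehat{Q}_{BB}$ isolates
\begin{equation*}
X_B = \widehat{Q}_{BB}Q_{BA}X_A + [O,\widehat{Q}_{BB}] - \widehat{Q}_{BB}\boldsymbol{e\pi}^T,
\end{equation*}
and substituting back into the first block equation, using (\ref{gen-tru-2}) for $Q^{(A)}$, reduces the problem to
\begin{equation*}
(-Q^{(A)})X_A = \left[I, Q_{AB}\widehat{Q}_{BB}\right] - \left(\boldsymbol{e} + Q_{AB}\widehat{Q}_{BB}\boldsymbol{e}\right)\boldsymbol{\pi}^T.
\end{equation*}
Because $A$ is finite, $Q^{(A)}$ is a finite, irreducible, conservative generator and its group inverse $(-Q^{(A)})^\#$ exists by the same finite-dimensional theory used around (\ref{finite-state})--(\ref{compute-(I-P)}).

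The decisive step is verifying that the right-hand side above lies in the range of $-Q^{(A)}$, equivalently that it is annihilated from the left by $(\boldsymbol{\pi}^{(A)})^T$ (so that $(-Q^{(A)})(-Q^{(A)})^\# = I - \boldsymbol{e}(\boldsymbol{\pi}^{(A)})^T$ acts as the identity on it). For this I use two facts: the CTMC analogue of Lemma \ref{cen-p} gives $\pi^{(A)}(i) = \pi(i)/\sum_{j\in A}\pi(j)$, and restricting $\boldsymbol{\pi}^T Q = \boldsymbol{0}^T$ to the $B$-block yields $\boldsymbol{\pi}_B^T = \boldsymbol{\pi}_A^T Q_{AB}\widehat{Q}_{BB}$, the continuous-time counterpart of (\ref{pi-1}). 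A short computation using these two identities collapses $(\boldsymbol{\pi}^{(A)})^T\cdot\mathrm{RHS}$ to $\boldsymbol{\pi}^T/c - \boldsymbol{\pi}^T/c = 0$, where $c := \sum_{j\in A}\pi(j)$. Applying $(-Q^{(A)})^\#$ therefore yields $X_A = \widetilde{X}_A + \boldsymbol{e\beta}^T$ with $\widetilde{X}_A$ as in (\ref{the-matrix-K-A-2}) and $\boldsymbol{\beta}$ arbitrary, and back-substitution via the first-hitting identity $\widehat{Q}_{BB}Q_{BA}\boldsymbol{e} = \boldsymbol{e}$ (expressing that the censored chain almost surely enters the non-empty set $A$) recovers $X_B = \widetilde{X}_B + \boldsymbol{e\beta}^T$ as in (\ref{the-matrix-K-B-2}).

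The main obstacle, and presumably the reason the proof is omitted in the paper, is simply lining up the CTMC censoring identities used above, namely $\widehat{Q}_{BB}(-Q_{BB}) = I$, $\widehat{Q}_{BB}Q_{BA}\boldsymbol{e} = \boldsymbol{e}$, $\boldsymbol{\pi}_B^T = \boldsymbol{\pi}_A^T Q_{AB}\widehat{Q}_{BB}$, and $\pi^{(A)}(i) = \pi(i)/\sum_{j\in A}\pi(j)$, as precise continuous-time translations of their discrete counterparts in Section \ref{sec:Prelim}; these all follow from standard CTMC censoring theory (see \cite{GV99}). Once these are in hand, the algebraic manipulations are literally the same as in the proof of Theorem \ref{the-result-general-MC}, so no new ideas are required.
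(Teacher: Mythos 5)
Your proposal is correct and takes essentially the same route the paper intends: Theorem~\ref{the-result-CTMC} is stated with the proof omitted precisely because it is the proof of Theorem~\ref{the-result-general-MC} with $I-P$, $\widehat{P}_{BB}$, $P^{(A)}$ replaced by $-Q$, $\widehat{Q}_{BB}$, $Q^{(A)}$, which is exactly what you carry out (block split, premultiplication by $\widehat{Q}_{BB}$, reduction to the censored equation, group inverse, and the identities $\widehat{Q}_{BB}Q_{BA}\boldsymbol{e}=\boldsymbol{e}$ and $\boldsymbol{\pi}_B^T=\boldsymbol{\pi}_A^T Q_{AB}\widehat{Q}_{BB}$). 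Your explicit verification that $(\boldsymbol{\pi}^{(A)})^T$ annihilates the right-hand side is a small extra precaution beyond what the paper writes for the DTMC case, not a different method.
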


\begin{remark}
For CTMCs, we can also define the deviation matrix $D$ and the expected integrable-type functional matrix $K$, see e.g., [7, 11].
By using Theorem \ref{the-result-CTMC}, we can obtain $D$ and $K$ in term of similar arguments in Corollary \ref{expression-D} and Theorem \ref{result-expression-add-functional},  respectively.
For CTMCs of GI/G/1type, we can also obtain parallel results to  that in sections 4 and 5.
\end{remark}

\section{Concluding remarks}
\label{sec:conclusions}
In the previous sections, we have investigated the  matrix solution $\widetilde{X}$ of Poisson's equation for general DTMCs by developing matrix-analytic methods.
Interestingly, we obtain the connection between the  matrix solution $\widetilde{X}$ and the matrix solution $\left(I-P^{(A)}\right)^{\#}$
of Poisson's equation for the censored Markov chain $\mathbf{\Phi}^{(A)}$ in the process of solving the matrix solution $K$.
Furthermore, we derive an explicit expression  of the matrix solution $\widetilde{X}$ for  Markov chains of $GI/G/1$-type,
which includes  results of  $QBD$ processes,  Markov chains of $GI/M/1$-type and  Markov chains of $M/G/1$-type.

There are other areas in which one might extend our studies.
The first possible extension is to consider level dependent Markov chains of $GI/G/1$-type.
It can be expected that the calculations of the $R$-measures and $G$-measures will become more challenging for the level dependent case.
The other possible extension is to consider Poisson's equation for positive recurrent fluid queues.
The arguments in this paper and Soares and Latouche \cite{SL06} may be modified to use,
but evidently it requires essentially different arguments to deal with the case of non-countable state space.

\section*{Acknowledgements}
This work were funded by the National Natural Science Foundation of China (Grants No. 11971486), Natural Science Foundation of Hunan (Grants No. 2020JJ4674) and Discovery Grant of the
Natural Sciences and Engineering Research Council of Canada (Grants No. 315660).
\normalem
\bibliography{mybibfile}
\bibliographystyle{unsrt}

\end{document}